\documentclass[11pt]{amsart}
\usepackage{verbatim, latexsym, amssymb, amsmath,color}
\usepackage{epsfig}
\usepackage{hyperref}
\usepackage{float}
\usepackage{graphicx}
\usepackage{subfig}
\usepackage{tikz}
\usepackage{setspace}
\singlespacing

\newtheorem{thm}{Theorem}[subsection]
\newtheorem{lemm}[thm]{Lemma}

\newtheorem*{thmA}{Theorem A (The Ricci condition)}
\newtheorem*{thmB}{Theorem B (Osserman-do Carmo-Dajczer condition)}
\newtheorem*{thmC}{Theorem C}
\newtheorem*{thmD}{Theorem D}
\newtheorem*{thmE}{The free boundary correspondence}

\theoremstyle{remark}

\theoremstyle{definition}
\newtheorem{exam}[thm]{Example}
\newtheorem*{defi}{Definition}
\newtheorem{rmk}[thm]{Remark}

\numberwithin{equation}{subsection}
\allowdisplaybreaks[1]

\title[Intrinsic metric of CMC surfaces with free boundary]{The intrinsic metric of constant mean curvature surfaces and minimal hypersurfaces with free boundary}
\author{Lucas Ambrozio}
\address{L. Ambrozio: IMPA \\ Rio de Janeiro RJ 22460-320
Brazil}
\email{l.ambrozio@impa.br}

\thanks{L.A. is supported by CNPq - Conselho Nacional de Desenvolvimento Cient\'ifico e Tecnol\'ogico (309908/2021-3 - Bolsa PQ) and by FAPERJ - Funda\c{c}\~ao Carlos Chagas Filho de Amparo \`a Pesquisa do Estado do Rio de Janeiro (grant SEI-260003/000534/2023 - BOLSA E-26/200.175/2023 and grant SEI-260003/001527/2023 - APQ1 E-26/210.319/2023)}

\begin{document}

\begin{abstract}
	Ricci-Curbastro established necessary and sufficient conditions for a Riemannian metric on a surface to be the first fundamental form of a minimal immersion of that surface into the Euclidean space. We revisit certain developments arising from his theorem, and propose new versions of these results in the context of the theory of constant mean curvature surfaces in three-dimensional space forms that meet umbilical surfaces orthogonally along their boundary components. Higher dimensional generalisations, inspired by a theorem of do Carmo and Dajczer, are discussed as well.
\end{abstract}

\maketitle

{\centering\footnotesize \textit{Dedicated to Professor Marcos Dajczer on the occasion of his 75th birthday}.\par}

\hfill
\vspace{2mm}

	In 1981, Marcos Dajczer and Manfredo do Carmo published a short article on the \textit{Boletim da Sociedade Brasileira de Matemática} \cite{dCDaj}. The main result of that work was a higher dimensional generalisation, for minimal hypersurfaces, of classical theorems about minimal surfaces in three-dimensional space forms. All these results trace back to a theorem of Ricci-Curbastro about the first fundamental form of minimal surfaces in $\mathbb{R}^3$ \cite{Ric}.
	
	We want to revisit these theorems with the aim of connecting them to a topic in the theory of minimal surfaces that has attracted great attention in recent years, namely, free boundary minimal surfaces in Euclidean balls \cite{Li}. On the way, we establish a correspondence between constant mean curvature surfaces in three-dimensional space forms that meet umbilical surfaces in right angles along their boundary components, and we prove the analogue of do Carmo-Dajczer main theorem in \cite{dCDaj} under this sort of boundary condition. 
	
	We have not attempted to exhaust the subject. Rather, we decided to focus on explaining a couple of key ideas and on discussing how they fit in current research. In any case, our expectation is that the systematic study of the ideas sketched here only briefly may bear fruits in the future.
	
	It is a pleasure to congratulate Professor Marcos Dajczer on this special occasion!		
	
	\section{The first fundamental form of CMC immersions}

\subsection{Motivation} Let $(\Sigma^2,g)$ be a connected, oriented Riemannian surface without boundary, not necessarily complete. The basic problem in the classical theory of isometric immersions is to identify properties of $g$ that can determine whether or not $(\Sigma^2,g)$ admits an isometric immersion into the Euclidean space $\mathbb{R}^3$, or, more generally, into one of the (simply connected) space forms $\mathbb{Q}^3(c)$ of constant sectional curvature $c\in \mathbb{R}$. 

	By the Fundamental Theorem of Isometric Immersions (see Theorem 1.11 in \cite{DajToj}), passing to the universal cover of $\Sigma^2$, the problem boils down to determine conditions which allow a symmetric bilinear form $A$ to be constructed on $\Sigma^2$, out of $g$ and invariant by deck transformations, in such way that the pair $(g,A)$ satisfies the two fundamental equations:
\begin{align*}
	\textit{(Gauss equation)}: & \quad K = c + det(A), \quad \text{and} \\
	\textit{(Coddazi equation)}: &  \quad (\nabla_X A)(Y,Z)= (\nabla_Y A)(X,Z) \text{ for all } X,Y,Z\in \mathcal{X}(\Sigma),
\end{align*}
and moreover the right period conditions are met. In the above formulas, $K$ denotes the Gaussian curvature and $\nabla$ denotes the Levi-Civita connection of $(\Sigma^2,g)$, while $det(A)$ denotes the product of the two eigenvalues of $A$. 

	If such tensor $A$ is found, then $A$ is the second fundamental form of an isometric immersion of $(\Sigma^2,g)$ into $\mathbb{Q}^{3}(c)$, which is unique up to orientation preserving isometries of $\mathbb{Q}^{3}(c)$. (Given the chosen orientations of $\Sigma^2$ and $\mathbb{Q}^3(c)$, a unique unit normal vector field $N$ is determined and, up to sign, $A=\langle D_- N,-\rangle$, where $D$ is the Levi-Civita connection of $\mathbb{Q}^3(c)$). 

	However, the two fundamental equations form a difficult and subtle geometric PDE system for $A$, which is tricky to address directly.
	
	 A more approachable problem is, perhaps, to look for conditions that are necessary and sufficient for the metric $g$ to arise from an isometric immersion of $(\Sigma^2,g)$ into  $\mathbb{Q}^3(c)$ that satisfies, moreover, further meaningful geometric restrictions. For instance, one could naively ask the following basic question: 
	 
	 \begin{center}
	 	\textit{What is special about the metric $g$ if it is the first fundamental form of a \textit{constant mean curvature} immersion of $\Sigma^2$ into $\mathbb{Q}^3(c)$?}
	 \end{center} 
	 Recall that an isometric immersion has constant mean curvature when $H$ - the \textit{average} of the trace of the second fundamental form $A$ with respect to $g$ - is constant on $\Sigma^2$.
	
	The same question can be formulated, of course, also in higher dimensions and codimensions. But in that degree of generality, interesting necessary conditions are less easy to find, and even more so sufficient conditions. See \cite{CheOss} for a comprehensive discussion on the case of minimal submanifolds of the Euclidean space $\mathbb{R}^n$. 
	
	Since in this article we want to discuss only the case of hypersurfaces, we therefore return to the formulation we started with, turning our attention back to the case of immersed surfaces in $\mathbb{Q}^3(c)$.	
	
\subsection{Dimension two} \label{sec1.2} A rather satisfactory answer to the basic question has been known in the case of \textit{minimal} ($H=0$) surfaces in $\mathbb{R}^3$ since at least the eighteen hundreds. The original theorem of Ricci-Curbastro \cite{Ric} was generalised by Lawson in the case of constant mean curvature (CMC) surfaces in the Euclidean sphere $\mathbb{S}^3=\mathbb{Q}^{3}(1)$ (see \cite{Law}, Theorem 8). The generalisation for constant mean curvature surfaces in all three-dimensional space forms is actually straightforward and well-known. It reads as follows:
	
	\begin{thmA}\label{thmA}
		Assume that $(\Sigma^2,g)$ with Gaussian curvature $K$ is isometrically immersed into $\mathbb{Q}^3(c)$ with constant mean curvature $H$. Then $c+H^2-K$ is non-negative, vanishes either everywhere or on a discrete set of points, and on the complement of its zero set the Riemannian metric $\sqrt{c+H^2-K}g$ is flat. 
		
		Conversely, given $(\Sigma^2,g)$, suppose that there exists a pair of constants $(c,H)$ such that $c+H^2-K$ is positive everywhere and the Riemannian metric $\sqrt{c+H^2-K}g$ is flat. Then $(\Sigma^2,g)$ admits, locally, a constant mean curvature $H$ isometric immersion into $\mathbb{Q}^3(c)$.
	\end{thmA}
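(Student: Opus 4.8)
The plan is to pass, in both directions, between the metric data and the second fundamental form $A$ via the Gauss and Codazzi equations, using the Hopf differential as the bridge between the scalar quantity $c+H^2-K$ and the holomorphic structure underlying constant mean curvature. For the direct statement, I would first invoke the Gauss equation. Writing the principal curvatures of the immersion as $\kappa_1,\kappa_2$, so that $H=\tfrac12(\kappa_1+\kappa_2)$ and $\det(A)=\kappa_1\kappa_2$, a one-line algebraic identity gives
\[
c+H^2-K \;=\; H^2-\det(A)\;=\;\frac{(\kappa_1-\kappa_2)^2}{4}\;=\;\tfrac12|\mathring{A}|^2,
\]
where $\mathring{A}=A-Hg$ is the trace-free part of $A$. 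This makes non-negativity immediate and identifies the zero set of $c+H^2-K$ with the set of umbilic points.

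Next I would work in a local conformal coordinate $z$ for $g$, so that $g=\lambda^2|dz|^2$, and introduce the Hopf differential $\Phi\,dz^2$, whose coefficient $\Phi$ packages $\mathring{A}$ and satisfies $c+H^2-K=|\Phi|^2/\lambda^4$. Since the ambient space $\mathbb{Q}^3(c)$ has constant curvature and $H$ is constant, the Codazzi equation collapses to the Cauchy--Riemann equation $\partial_{\bar z}\Phi=0$, so $\Phi$ is holomorphic. A non-trivial holomorphic function has isolated zeros, which gives the dichotomy that $c+H^2-K$ vanishes either identically or on a discrete set. For the flatness claim, on the complement of the zero set $\Phi$ is holomorphic and nowhere zero, so locally $\Phi=f^2$ for a holomorphic $f$, and then
\[
\sqrt{c+H^2-K}\,g \;=\; \frac{|\Phi|}{\lambda^2}\,\lambda^2|dz|^2 \;=\; |f|^2|dz|^2 \;=\; |f\,dz|^2.
\]
Passing to the holomorphic coordinate $w=\int f\,dz$ turns this into $|dw|^2$, a flat metric, which finishes the necessity direction.

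For the converse the strategy is to reverse this construction and appeal to the Fundamental Theorem of Isometric Immersions stated above. Given that $\sqrt{c+H^2-K}\,g$ is flat, I would choose a local coordinate $w=x+iy$ with $\sqrt{c+H^2-K}\,g=|dw|^2$; this $w$ is then automatically conformal for $g$, with $g=(c+H^2-K)^{-1/2}|dw|^2$. I would prescribe the candidate second fundamental form by declaring the Hopf differential to be the constant $\Phi\equiv 1$ and the mean curvature to be $H$, that is $A=Hg+\mathring{A}$ with $\mathring{A}$ diagonal of entries $\pm1$ in the $w$-frame; the normalisation $\sqrt{c+H^2-K}\,g=|dw|^2$ is exactly what forces $|\Phi|=1$ here. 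A direct computation then gives $\operatorname{tr}_g A=2H$ and $\det(A)=H^2-(c+H^2-K)=K-c$, so the Gauss equation holds, and since both $\Phi$ and $H$ are constant the Codazzi equation holds as well; the Fundamental Theorem produces the desired local constant mean curvature $H$ immersion into $\mathbb{Q}^3(c)$.

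The hard part, and the hinge of the whole argument, will be the verification that the Codazzi equation is genuinely equivalent to holomorphicity of $\Phi$ when $H$ is constant: one must check carefully that in a space form the ambient curvature contributes no extra term to Codazzi, and that the tensorial identity $(\nabla_XA)(Y,Z)=(\nabla_YA)(X,Z)$ reduces in conformal coordinates to $\partial_{\bar z}\Phi=\lambda^2\,\partial_z H$. Once this computation is pinned down it powers both directions at once; the remaining ingredients are elementary linear algebra and a local complex-analytic change of coordinates.
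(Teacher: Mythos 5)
Your proposal is correct, and it diverges from the paper's argument at two genuine junctures, each time trading a tensor computation for a complex-analytic one. For the flatness claim in the direct statement, the paper derives the Simons-type identity $\tfrac{1}{2}\Delta|\mathring{A}|^2 = |\nabla \mathring{A}|^2 + 2K|\mathring{A}|^2$, combines it with the two-dimensional identity $|\nabla \mathring{A}|^2 = 2|\nabla|\mathring{A}||^2$ to obtain $\Delta \log|\mathring{A}| = 2K$ away from umbilic points, and then invokes the conformal change formula for Gaussian curvature; you instead write $\sqrt{c+H^2-K}\,g = |\Phi|\,|dz|^2$, take a local holomorphic square root $\Phi = f^2$ of the nowhere-vanishing Hopf coefficient, and exhibit explicit flat coordinates $w = \int f\,dz$. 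Your route bypasses the Simons identity entirely and is arguably cleaner; the paper's route has the advantage that the identity $\Delta\log|\mathring{A}|=2K$ is precisely what resurfaces later in the article (it underlies the Ricci-surface equation \eqref{eqMoroianu} and the boundary analysis of Theorem C). In the converse, the paper works in arbitrary isothermal coordinates for $g$, recasts flatness as harmonicity of $\log\bigl((c+H^2-K)e^{4u}\bigr)$, and needs Lemma \ref{lemlogharmonic} from the Appendix to manufacture a holomorphic $\phi$ with $|\phi|^2=(c+H^2-K)e^{4u}$, from which the candidate $A$ is assembled; you instead choose coordinates that are flat for the metric $\sqrt{c+H^2-K}\,g$ itself, which normalises the Hopf differential to $\Phi\equiv 1$ and renders the Appendix lemma unnecessary --- a real simplification, though it is exactly the Appendix lemma (in its Neumann version) that the paper reuses to prove Theorem C, so your shortcut is less portable to the free boundary setting. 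Both your argument and the paper's rest, in the end, on the same verification, which you correctly flag as the hinge: that for constant $H$ the Codazzi equation is equivalent in conformal coordinates to $\partial_{\bar z}\Phi = 0$. The paper also leaves this as a ``long but straightforward computation,'' so you are not below its standard of rigour, but in a complete write-up you should carry it out, since it is what justifies both the holomorphicity of $\Phi$ in the direct part and the validity of Codazzi for your prescribed $A$ in the converse.
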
	
	In the converse statement, we remark that the universal cover of $(\Sigma^2,g)$ always admits a constant mean curvature $H$ isometric immersion into $\mathbb{Q}^{3}(c)$.
		
	We want to sketch the proof of the above result, emphasising the role played by the traceless part of the second fundamental form,
	\begin{equation*}
		\mathring{A}=A-Hg,
	\end{equation*}
	which is conformally invariant, and by a Simons-type identity for the Laplacian of the squared-norm of $\mathring{A}$ on $(\Sigma^2,g)$, see \eqref{eqSimons} below.	
	\begin{proof}
		$(\Rightarrow)$ The Gauss equation can be rewritten as
		\begin{equation} \label{eqGaussrewritten}
			K = c + H^2 - \frac{1}{2}|\mathring{A}|^2.
		\end{equation}
		Thus, $c+H^2-K=|\mathring{A}|^2/2$ is non-negative on $\Sigma^2$.
		
		The Codazzi equation implies that, in local isothermal coordinates $z=x+iy$ compatible with the Riemann surface structure of $(\Sigma^2,g)$, the complex function
		\begin{equation}\label{eqholomorphic}
			\phi(z) = 2\mathring{A}\left(\frac{\partial}{\partial z},\frac{\partial}{\partial z}\right) = \frac{1}{2}\left(\mathring{A}\left(\frac{\partial}{\partial x},\frac{\partial}{\partial x}\right)- \mathring{A}\left(\frac{\partial}{\partial y},\frac{\partial}{\partial y}\right)\right) - i\mathring{A}\left(\frac{\partial}{\partial x},\frac{\partial}{\partial y}\right)	
		\end{equation}
	is holomorphic. Since $\phi(z)=0$ if, and only if, the traceless tensor $\mathring{A}$ vanishes at $z$, we conclude that the function $c+H^2-K=|\mathring{A}|^2/2$ either vanishes everywhere or vanishes on a discrete set of points of $\Sigma^2$. 
	
		Finally, combining Gauss and Codazzi equations, a straightforward computation of the Laplacian of $|\mathring{A}|^2$ in $(\Sigma^2,g)$ leads to the identity
		\begin{equation} \label{eqSimons}
			\frac{1}{2}\Delta |\mathring{A}|^2 = |\nabla \mathring{A}|^2 + 2K|\mathring{A}|^2.
		\end{equation}
		Since, in dimension $n=2$, any traceless tensor $\mathring{A}$ satisfies 
		\begin{equation*}	
			|\nabla \mathring{A}|^2 = 2|\nabla|\mathring{A}||^2
		\end{equation*} 
at points where $|\mathring{A}|\neq 0$, equation \eqref{eqSimons} can be rewritten as 
		\begin{equation*}
			\Delta \log(|\mathring{A}|) = 2K
		\end{equation*}
	on the set of points where $\mathring{|A|}\neq 0$. 
		
		Recalling the equation that relates the Gaussian curvature of conformal metrics $h=e^{2u}g$, namely
	\begin{equation} \label{eqconformal1}
		-\Delta_g u + K_g = K_h e^{2u}, 
	\end{equation}		
	 we conclude that the conformal factor $e^{2u}=|\mathring{A}|$  turns $g$ into a flat metric. The same can be said about the conformal factor $\sqrt{c+H^2-K}=|\mathring{A}|/\sqrt{2}$.
	 
	 $(\Leftarrow)$ Consider local isothermal coordinates $z=x+iy$ on a disc in $(\Sigma^2,g)$, compatible with the underlying Riemann surface structure, so that $g=e^{2u}(dx^2+dy^2)$ for some smooth function $u$. By hypothesis, $\sqrt{c+H^2-K}g$ is a flat Riemannian metric. By equation \eqref{eqconformal1}, the assumption is therefore equivalent to the equation
	 \begin{equation*}	
		\Delta (\log(c+H^2-K)e^{4u}) = 0,
	 \end{equation*} 
where $\Delta=\partial_x^2+\partial_y^2$. By Lemma \ref{lemlogharmonic} (see the Appendix), there exists a holomorphic function $\phi(z)$ on this coordinate neighbourhood such that
	 \begin{equation*}
	 	|\phi|^2 = (c+H^2-K)e^{4u}.
	 \end{equation*}
	 
	 Next, define a two-tensor $A$ so that, on this coordinate neighbourhood, 
	 \begin{align*}
	 	A(\partial_x,\partial_x) & = Re(\phi) + He^{2u}, \\
	 	A(\partial_y,\partial_y) & = -Re(\phi) + He^{2u}, \\
	 	A(\partial_x,\partial_y) & = -Im(\phi)  = A(\partial_y,\partial_x).
	 \end{align*}
	 Then, $A$ defines a symmetric two-tensor, with average trace $H$, whose traceless part has squared-norm $2|\phi|^2e^{-4u}$ with respect to the metric $g=e^{2u}(dx^2+dy^2)$. (We may use $\{e^{-u}\partial_x,e^{-u}\partial_y\}$ as $g$-orthonormal frame to perform these computations).
	 
	 A long but straightforward computation then shows that $(g,A)$ satisfy both the Gauss equation (rewritten as in \eqref{eqGaussrewritten}) and the Codazzi equation for a constant mean curvature $H$ isometric immersion into $\mathbb{Q}^3(c)$. By the Fundamental Theorem of Isometric Immersions, this simply connected coordinate neighbourhood of $(\Sigma^2,g)$ isometrically immerses into $\mathbb{Q}^3(c)$ with constant mean curvature $H$.
	\end{proof}
	
	\begin{rmk}\label{examLawson}
		Lawson gave an example of Riemannian metric $g$ on $\mathbb{R}^2$ such that its Gaussian curvature is negative everywhere except at the origin, such that the Riemannian metric $\sqrt{-K}g$ is flat where it is defined, and yet $(\mathbb{R}^2,g)$ does not isometrically immerse as a minimal surface into $\mathbb{R}^3$ (see \cite{Law}, Remark 12.1). Thus, in a sense, Theorem A is sharp. Nevertheless, it is not difficult to see what fails in his example. As soon as one understand the special structure of the zeroes of $K$ - namely, that they are the zeroes of $|\phi|^2$ for some holomorphic function $\phi$ in local isothermal coordinates - one can figure out the right extra necessary condition that rules out Lawson's examples. This observation has been explored in details by Andrei Moroianu and Sergiu Moroianu \cite{MorMor}, who introduced recently the very apt concept of \textit{Ricci surfaces}. We will discuss this notion further in Section \ref{subsecRicci}.	
	\end{rmk}	
	
\subsection{The Lawson correspondence} There is an intriguing observation regarding the converse statement of Theorem A. If different pairs of constants $(c,H)$ and $(\tilde{c},\tilde{H})$ satisfy $c+ H^2=\tilde{c}+\tilde{H}^2$, then the sufficient condition that needs to be verified is satisfied by the pair $(c,H)$ if, and only if, it is satisfied by the pair $(\tilde{c},\tilde{H})$. This means that every simply connected CMC surface in a space form is isometric to a CMC ``cousin" surface with different constant mean curvature in a different space form.
	
	This correspondence is known as the \textit{Lawson correspondence}. It has motivated many interesting developments, for instance, in the theory of CMC $H=1$ surfaces in the hyperbolic space (also known as Bryant surfaces after \cite{Bry}), which are simply the cousins to minimal surfaces in $\mathbb{R}^3$.
	
	\begin{exam}	
		Complete minimal surfaces of revolution in the Euclidean space $\mathbb{R}^3$ have been classified long ago: they are the \textit{catenoids}. Up to isometries and dilations of $\mathbb{R}^3$, there exists only one catenoid. Robert Bryant showed that catenoids are in correspondence with complete CMC $H= 1$ surfaces of revolution in the hyperbolic space $\mathbb{H}^3=\mathbb{Q}^3(-1)$, called the \textit{catenoid cousins} \cite{Bry}. Some catenoid cousins are embedded, while others are immersed but not embedded. It is interesting to notice that dilated Euclidean catenoids correspond to very different hyperbolic catenoid cousins.
	\end{exam}

\subsection{Higher dimensions} Let $(\Sigma^n,g)$ be a connected, oriented Riemannian manifold without boundary, not necessarily complete, that is isometrically immersed into the space form $\mathbb{Q}^{n+1}(c)$. In dimensions $n\geq 3$, while the Codazzi equation for the second fundamental form $A$ is the same, the Gauss equation for the Riemman curvature tensor of $g$ and its traces read as
	\begin{align}
		Rm & = \frac{1}{2}c\, g\odot g + \frac{1}{2}A\odot A, \label{eqGauss1}\\
		Ric & = c(n-1)g + nHA - A\circ A, \label{eqGauss2} \\
		R & = cn(n-1) + n^2H^2 - |A|^2. \nonumber	
	\end{align} 
	In \eqref{eqGauss1}, $\odot$ denotes the Kulkarni-Nomizu product of symmetric two-tensors, while in \eqref{eqGauss2}, $A\circ A$ denotes the symmetric two-tensor $(A\circ A)(X,Y)= \sum_{i=1}^n A(X,e_i)A(e_i,Y)$ for every $X$, $Y\in \mathcal{X}(\Sigma)$, where $\{e_i\}$ is any local $g$-orthonormal frame.
	
	It will be convenient to have the following notation at hand.  Let $\hat{A}$ be the (fiberwise) endomorphism of the tangent bundle defined by 
	\begin{equation*}
		g(\hat{A}(X),Y)=A(X,Y) \quad \text{for every} \quad X,Y\in \mathcal{X}(\Sigma).
	\end{equation*}
	Then $\hat{A}$ is symmetric with respect to $g$ and
	\begin{equation*}
		(A\circ A)(X,Y)=g(\hat{A}(X),\hat{A}(Y)) \quad \text{for every} \quad X,Y\in \mathcal{X}(\Sigma).
	\end{equation*}
	Also, by construction, 
	\begin{equation*}
		(\nabla_X A)(Y,Z)=g((\nabla_X\hat{A})(Y),Z) \quad \text{for every} \quad X,Y,Z\in \mathcal{X}(\Sigma).
	\end{equation*}
		
	Rewriting the equation \eqref{eqGauss2} in terms of the traceless part $\mathring{A} = A - Hg$ of $\mathring{A}$, we obtain
	\begin{equation*}
		Ric = (n-1)(c+H^2)g + (n-2)H^2\mathring{A} - \mathring{A}\circ \mathring{A}.
	\end{equation*}
	Thus, in contrast with the case $n=2$ (where $Ric=Kg$), except when $H$ vanishes identically, it is not clear whether the symmetric two-tensor $(n-1)(c+H^2)g -Ric$ is non-negative or not. (At least, it does not needs to be so from a purely \textit{algebraic} point of view, even when assuming $H$ is constant). 
	
	In order to continue the analysis, we therefore restrict our attention to the case $H=0$. Chern and Osserman \cite{CheOss}  analysed the case $c=0$ in detail, but the final statement, for arbitrary values of $c$, is precisely the theorem proven by Dajczer and do Carmo in \cite{dCDaj}. We state it as follows:

	\begin{thmB}
		Suppose that $(\Sigma^n,g)$ is isometrically immersed into $\mathbb{Q}^{n+1}(c)$ as a minimal hypersurface, and denote by $A$ its second fundamental form. Then the symmetric two-tensor $\overline{g} = c(n-1)g - Ric_g$ is non-negative, and wherever it is positive definite, it is a Riemannian metric such that
		\begin{itemize}
			\item[$i)$] $\overline{g}(\overline{\nabla}_X Y,Z) = g(\nabla_X (\hat{A}(Y)),\hat{A}(Z))$ for every $X$, $Y$, $Z\in \mathcal{X}(\Sigma)$,
			\item[$ii)$] $Rm_{\overline{g}} = \frac{1}{2}\overline{g}\odot \overline{g} + \frac{c}{2} A\odot A$.
		\end{itemize}		
		
		Conversely, assume that $(\Sigma^n,g)$ is such that $\overline{g}=c(n-1)g-Ric_g$ is a Riemannian metric on $\Sigma^n$, and that there exists a smooth symmetric two-tensor $A$ on $(\Sigma^n,g)$ such that $\overline{g}=A\circ A$ satisfies properties $i)$ and $ii)$ above. Then $(\Sigma^n,g)$ admits, locally, a minimal isometric immersion into $\mathbb{Q}^{n+1}(c)$.
	\end{thmB}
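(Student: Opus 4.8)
The plan is to produce the immersion through the Fundamental Theorem of Isometric Immersions, exactly as in the two-dimensional case: I would show that the given symmetric two-tensor $A$ is a traceless solution of the Gauss equation \eqref{eqGauss1} and of the Codazzi equation for a hypersurface of $\mathbb{Q}^{n+1}(c)$, so that on every simply connected neighbourhood the theorem yields a minimal isometric immersion with second fundamental form $A$. Since $\overline{g}$ is assumed to be a Riemannian metric, the $g$-symmetric endomorphism $\hat{A}$ is invertible at every point, and this invertibility is what drives the whole scheme.

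The first step is to reinterpret hypothesis $i)$. Writing $\overline{g}(U,V)=g(\hat{A}U,\hat{A}V)$, condition $i)$ reads $g(\hat{A}(\overline{\nabla}_X Y),\hat{A}Z)=g(\nabla_X(\hat{A}Y),\hat{A}Z)$ for all $Z$; since $\hat{A}$ is invertible, this is equivalent to the intertwining relation
\begin{equation*}
	\hat{A}(\overline{\nabla}_X Y)=\nabla_X(\hat{A}Y)\qquad\text{for all }X,Y.
\end{equation*}
In other words, $\hat{A}$ identifies $\overline{\nabla}$ with $\nabla$. The Codazzi equation then comes out for free: setting $D_XY=\overline{\nabla}_XY-\nabla_XY$, the relation above gives $(\nabla_X\hat{A})(Y)=\hat{A}(D_XY)$, and $D$ is symmetric because $\nabla$ and $\overline{\nabla}$ are both torsion-free. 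Hence $(\nabla_X\hat{A})(Y)=(\nabla_Y\hat{A})(X)$, which is precisely Codazzi.

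For the Gauss equation I would first propagate the intertwining to curvatures. Iterating $\hat{A}\circ\overline{\nabla}_X=\nabla_X\circ\hat{A}$ yields $\hat{A}(\overline{R}(X,Y)Z)=R(X,Y)(\hat{A}Z)$, and therefore, using that $\hat{A}$ is $g$-symmetric,
\begin{equation*}
	Rm_{\overline{g}}(X,Y,Z,W)=g\big(R(X,Y)\hat{A}Z,\hat{A}W\big)=Rm_{g}(X,Y,\hat{A}Z,\hat{A}W).
\end{equation*}
Feeding this into hypothesis $ii)$ and then replacing $Z,W$ by $\hat{A}^{-1}Z,\hat{A}^{-1}W$, the problem reduces to an algebraic identity between Kulkarni--Nomizu products. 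The two elementary relations $\overline{g}(U,\hat{A}^{-1}V)=A(U,V)$ and $A(U,\hat{A}^{-1}V)=g(U,V)$ turn $\frac{1}{2}\,\overline{g}\odot\overline{g}$ into $\frac{1}{2}\,A\odot A$ and $\frac{c}{2}\,A\odot A$ into $\frac{c}{2}\,g\odot g$, so that $ii)$ becomes exactly the Gauss equation \eqref{eqGauss1}. Tracing \eqref{eqGauss1} produces \eqref{eqGauss2}, and comparing it with the hypothesis $\overline{g}=c(n-1)g-Ric_g=A\circ A$ forces $nHA=0$; as $\hat{A}$ is invertible, $H\equiv0$, so $A$ is traceless. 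The Fundamental Theorem of Isometric Immersions then provides, on each simply connected neighbourhood, a minimal isometric immersion into $\mathbb{Q}^{n+1}(c)$.

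The conceptual heart of the argument, and the step I expect to require the most care, is the curvature computation: one must first recognise that $i)$ is an intertwining of connections (this is what allows curvature to be transported by $\hat{A}$) and then carry out the Kulkarni--Nomizu bookkeeping correctly, keeping track of signs and of which pair of slots is contracted. Everything else — the torsion-free cancellation giving Codazzi, and the trace identity forcing minimality — is comparatively routine once the invertibility of $\hat{A}$ is exploited.
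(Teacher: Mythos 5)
Your proposal proves only half of the theorem. Theorem B asserts necessary \emph{and} sufficient conditions: the forward direction (a minimal isometric immersion forces $\overline{g}=c(n-1)g-Ric_g=A\circ A$ to be non-negative and to satisfy $i)$ and $ii)$ wherever positive definite) and the converse (those conditions allow one to build the immersion locally). Everything you wrote addresses the converse; the forward implication never appears, and it is not vacuous. Non-negativity follows from the Gauss equation \eqref{eqGauss2} with $H=0$, since $(A\circ A)(X,X)=|\hat{A}(X)|^2$; property $i)$ is where the Codazzi equation enters, either via the Koszul formula for $\overline{\nabla}$ (the paper's equation \eqref{eqkoszul}) or, equivalently, by checking that the connection $\overline{\nabla}_X Y:=\hat{A}^{-1}\bigl(\nabla_X(\hat{A}Y)\bigr)$ is $\overline{g}$-metric (automatic) and torsion-free (this is exactly Codazzi), hence is the Levi-Civita connection of $\overline{g}$; property $ii)$ then follows from your curvature-transport identity combined with the Gauss equation \eqref{eqGauss1}. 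Ironically, your own intertwining framework yields this direction almost for free --- you simply never ran it in reverse.

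The converse half, which you did write out, is correct and is essentially the paper's argument in a cleaner conceptual packaging. Where the paper works through the Koszul identity \eqref{eqkoszul} and extracts Codazzi by testing against $\hat{A}^{-1}(Z)$, you recast hypothesis $i)$ as the intertwining $\hat{A}(\overline{\nabla}_X Y)=\nabla_X(\hat{A}Y)$ and get Codazzi from the symmetry of the difference tensor of two torsion-free connections; where the paper verifies the Gauss equation on $(X,Y,X,Y)$-components and invokes the symmetries of curvature-type tensors, you transport the full tensor, $Rm_{\overline{g}}(X,Y,Z,W)=Rm_g(X,Y,\hat{A}Z,\hat{A}W)$, and undo it with $\hat{A}^{-1}$ using the correct identities $\overline{g}(U,\hat{A}^{-1}V)=A(U,V)$ and $A(U,\hat{A}^{-1}V)=g(U,V)$. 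Your final step --- tracing \eqref{eqGauss1} and comparing with $\overline{g}=A\circ A$ to force $nHA=0$, hence $H\equiv 0$ by invertibility of $\hat{A}$ --- is exactly the paper's \textit{a posteriori} minimality argument. So the fix is straightforward: add the forward direction, which your machinery already handles, and the proof is complete.
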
	
	
	When $c=0$, the condition $ii)$ above holds if, and only if, $\overline{g}$ is a metric of constant sectional curvature $1$. In fact, in this case $\overline{g}$ is nothing but the pulled-back metric from the sphere $\mathbb{S}^n(1)\subset \mathbb{R}^{n+1}$ by the Gauss map $N$. 
	
	The original proof in \cite{dCDaj} was formulated in the language of Cartan's \textit{rep\`ere mobile} method. We sketch a proof below using common tensor language, as in \cite{DajToj}, Section 3.6.

\begin{proof}
	$(\Rightarrow)$ The second fundamental form $A$ has average trace $H=0$ and, by the Gauss equation \eqref{eqGauss2},
	\begin{equation*}
		(n-1)cg - Ric_g = A\circ A 
	\end{equation*}
	is a non-negative symmetric tensor, because $(A\circ A)(X,X)=|\hat{A}(X)|^2$ for all tangent vector fields $X$ on $(\Sigma^n,g)$. 
	
	Consider the set where $\overline{g} = (n-1)cg - Ric_g=A\circ A$ is positive definite. It is the same set as the set of points where $\hat{A}$ is invertible. By a straightforward computation using the Koszul equation, the Levi-Civita connection $\overline{\nabla}$ of $\overline{g}$ satisfies
	\begin{align} \label{eqkoszul}
		2\overline{g}(\overline{\nabla}_X Y,Z) =&\,  2g(\nabla_X (\hat{A}(Y)),\hat{A}(Z))   \\
		& + \langle (\nabla_X \hat{A})(Z) - (\nabla_Z \hat{A})(X), \hat{A}(Y) \rangle \nonumber \\ 
		& + \langle (\nabla_Y A)(Z) - (\nabla_Z \hat{A})(Y), \hat{A}(X) \rangle \nonumber \\
		& - \langle (\nabla_X \hat{A})(Y)-(\nabla_Y \hat{A})(X),\hat{A}(Z)\rangle \nonumber .
	\end{align}
	Thus, the Codazzi equation implies $i)$.  
	
	Another straightforward computation using $i)$ and the definition of the Riemann curvature tensor $\overline{Rm}$ of $\overline{g}$ gives
	\begin{equation}\label{eqcurvaturetensor}
		\overline{Rm}(X,Y,X,Y) = Rm(X,Y,\hat{A}(X),\hat{A}(Y)) \,\, \text{for every} \,\, X,Y \in \mathcal{X}(\Sigma).
	\end{equation} 
	By the Gauss equation \eqref{eqGauss1},
	\begin{align*}\label{eqcurvaturetensor}
		\overline{Rm}(X,Y,X,Y) & = c (g(X,\hat{A}(X))g(Y,\hat{A}(Y)) - g(X,\hat{A}(Y))g(Y,\hat{A}(X)) ) \\
		& \quad + A(X,\hat{A}(X))A(Y,\hat{A}(Y))-A(X,\hat{A}(Y))A(Y,\hat{A}(X)). \\
		& = c ( A(X,X)A(X,Y) - A(X,Y)^2) \\
		& \quad + \overline{g}(X,X)\overline{g}(Y,Y) - \overline{g}(X,Y)^2.
	\end{align*} 
	Since $X$ and $Y$ are arbitrary (and a four-tensor $T$ with the same symmetries of the Riemann curvature tensor is uniquely determined by the values $T(X,Y,X,Y)$), $ii)$ follows.
	
	$(\Leftarrow)$ By definition of $A$, $|\hat{A}(X)|^2=(A\circ A)(X,X)=\overline{g}(X,X)$ for every tangent vector $X$. Since, by assumption, $\overline{g}$ is positive definite, $\hat{A}$ is invertible. 
	
	Using the symmetry of the Levi-Civita connection $\overline{\nabla}$ of $\overline{g}$, equation \eqref{eqkoszul} together with $i)$ implies that, for every tangent vectors $X$, $Y$ and $Z$ on $\Sigma^n$,
	\begin{align*}
		0 & = \overline{g}(\overline{\nabla}_X Y - \overline{\nabla}_Y X-[X,Y],\hat{A}^{-1}(Z)) \\
		& = g(\nabla_X(\hat{A}(Y)) - \nabla_Y(\hat{A}(X)) - \hat{A}([X,Y]),Z)\\
		& = g( (\nabla_X \hat{A})(Y) - (\nabla_Y \hat{A})(X),Z) \\
		& = (\nabla_X A)(Y,Z) - (\nabla_Y A)(X,Z).
	\end{align*} 
	Hence, $A$ satisfies the Codazzi equation on $(\Sigma^n,g)$.
	
	As seen above, equation \eqref{eqcurvaturetensor} is a consequence of the definition of $\overline{g}$ and $A$ and together with $i)$. Since $\hat{A}$ is invertible, \eqref{eqcurvaturetensor} and $ii)$ implies that
	\begin{align*}
		Rm(X,Y,X,Y) = &  \overline{Rm}(X,Y,\hat{A}^{-1}(X),\hat{A}^{-1}(Y)) \\ 
	 	= &  \overline{g}(X,\hat{A}^{-1}(X))\overline{g}(Y,\hat{A}^{-1}(Y)) - \overline{g}(X,\hat{A}^{-1}(Y))\overline{g}(Y,\hat{A}^{-1}(X)) \\
		& + c A(X,\hat{A}^{-1}(X))A(Y,\hat{A}^{-1}(Y)) \\
		& - c A(X,\hat{A}^{-1}(Y))A(Y,\hat{A}^{-1}(X)) \\
		= & g(\hat{A}(X),X)g(\hat{A}(Y),Y)-g(\hat{A}(X),Y)g(\hat{A}(Y),X) \\
		& + cg(X,X)g(Y,Y)-cg(X,Y)^2
	\end{align*}
	for all $X$, $Y\in \mathcal{X}(\Sigma)$. In other words, $A$ satisfy the Gauss equation \eqref{eqGauss1} for an isometric immersion of $(\Sigma^n,g)$ into $\mathbb{Q}^{n+1}(c)$. 
	
	Therefore the Fundamental Theorem of Isometric Immersions applies for the pair $(g,A)$ on any simply connected neighbourhood in $\Sigma$, and provides local isometric immersions of $(\Sigma^n,g)$ into $\mathbb{Q}^{n+1}(c)$ with second fundamental form $A$. \textit{A posteriori}, since $Ric_g = c(n-1)g - A\circ A$ (by assumption) as well as $Ric_g =c(n-1)g +nHA -A\circ A$ (by \eqref{eqGauss2}), and since $\hat{A}$ is invertible, we must have $H=0$. In other words, these local isometric immersions are minimal.
\end{proof}
	
	\begin{rmk}
		For minimal immersed hypersurfaces in $\mathbb{Q}^{n+1}(c)$, $n\geq 3$, the structure of the set where $c(n-1)g - Ric_g = A\circ A$ ceases to be positive definite has a more complicated structure. In three dimensions, $A\circ A$ is not positive definite at a point $p$ if, and only if, either $\hat{A}$ has a one-dimensional kernel or vanishes at $p$. In higher dimensions, the situation is of course much wilder. (See, however, the discussions in \cite{CheOss} about the specific case $c=0$). We think that it could be interesting to understand this behaviour somehow, specially in three-dimensions, because this understanding could lead to the formulation of a meaningful higher dimensional generalisation of the concept of Ricci surfaces in the sense of \cite{MorMor}, \textit{cf}. Section \ref{subsecRicci}.
	\end{rmk}
	
\section{The free boundary condition} 	

	After this overview, let us consider isometric immersions of a connected, oriented Riemannian manifold $(\Sigma^n,g)$ with boundary $\partial \Sigma\neq \emptyset$, not necessarily complete. Our aim is to find analogues of Theorems A and B under geometrically reasonable boundary conditions on the pair $(g,A)$, added to the Gauss and Codazzi equations.
	
	Before we formulate these boundary conditions, let us fix some notation once and for all. The unit outward-pointing conormal of $\partial \Sigma$ inside $\Sigma^n$ will be denoted by $\nu$. If $\partial_i \Sigma$ is a boundary component of $\Sigma^n$, we denote by $B_i=g(\nabla_{-}\nu,-)$ its second fundamental form in $(\Sigma^n,g)$.
	
\subsection{Umbilical hypersurfaces and capillary boundary conditions} \label{subsecumbilical} Inside any space form $\mathbb{Q}^{n+1}(c)$ there are distinguished hypersurfaces called \textit{umbilical}, which are defined by the property that their second fundamental form is a multiple of the metric at every point. Connected umbilical hypersurfaces in $\mathbb{Q}^{n+1}(c)$ have therefore constant mean curvature, and they are easily classified. In fact, connected umbilical hypersurfaces are just the standard embeddings of space forms $\mathbb{Q}^{n}(b)$ in $\mathbb{Q}^{n+1}(c)$, which exist only when $b\geq c$. (See Proposition 1.20 in \cite{DajToj}).

	Denote by $\overline{N}$ the unit normal to an umbilical hypersurface $\mathbb{Q}^{n}(b)$ of $\mathbb{Q}^{n+1}(c)$, chosen in such way that its mean curvature $\overline{H}$ is non-negative. The relation between the ambient sectional curvature $c$, the intrinsic sectional curvature $b$ and the mean curvature $\overline{H}$ of the umbilical immersion of $\mathbb{Q}^n(b)$ into $\mathbb{Q}^{n+1}(c)$ is 
	\begin{equation*}
		b=c+\overline{H}^2.
	\end{equation*}
(Notice that $\overline{H}=0$ if, and only if, $b=c$, in which case the second fundamental form vanishes identically, by virtue of the Gauss equation \eqref{eqGauss1}. The choice of $\overline{N}$ is ambiguous in this case, and can be determined only by a further choice of orientation).

	The geometric situation that interests us is when a hypersurface $\Sigma^n$ of $\mathbb{Q}^{n+1}(c)$ intersects an umbilical hypersurface $\mathbb{Q}^n(b_i)$ transversally along a boundary component $\partial_i \Sigma$. Moreover, we want to suppose that the angle between $\nu$ and the unit normal $\overline{N}$ to $\mathbb{Q}^n(b_i)$ is constant along $\partial_i \Sigma$. This boundary condition is known as the \textit{capillary condition}. In the particular case where $\overline{N}=\nu$ or $\overline{N}=-\nu$ on $\partial_i \Sigma$, that is, when $\Sigma^n$ meets $\mathbb{Q}^n(b_i)$ orthogonally along $\partial_i\Sigma$, we say that $\Sigma^n$ is \textit{free boundary} with respect to $\mathbb{Q}^n(b_i)$ along $\partial_i\Sigma$.  
	
	Since $\partial_i\Sigma$ is contained in $\mathbb{Q}^{n}(b_i)$, there exists a unique unit vector field $\overline{\nu}$ along $\partial_i \Sigma$, tangent to $\mathbb{Q}^n(b_i)$ but normal to $\partial_i \Sigma$, in such way that $\overline{N}$ and $\overline{\nu}$ (in that order) form together another orthonormal normal frame along $\partial_i\Sigma$ that induces the same orientation on the normal bundle of $\partial_i \Sigma$ as $N$ and $\nu$ (in that order). Assuming the capillary condition, there exists moreover a constant $\alpha_i\in (0,\pi)\cup (\pi,2\pi)$ such that
	\begin{align*}
		\overline{N} & = \cos(\alpha_i)N + \sin(\alpha_i)\nu,  \\
		\overline{\nu} & = -\sin(\alpha_i)N + \cos(\alpha_i) \nu.
	\end{align*} 
		
		Equations \eqref{eqfund} below will be key to the results we are searching for.	
	\begin{lemm} \label{lemma}
	Under the capillary conditions, for every tangent vector fields $X$ and $Y$ on $\partial_i \Sigma$, we have
		\begin{itemize}
			\item[$i)$] $A(X,\nu)=0$,
			\item[$ii)$] $\overline{H}g(X,Y)= \cos(\alpha_i)A(X,Y) + \sin(\alpha_i)B_i(X,Y)$.
		\end{itemize}
		In particular, in the free boundary case,
		\begin{equation} \label{eqfund}
			A(X,\nu) = 0 \quad \text{and} \quad B_i = \pm \sqrt{b_i-c}g,
		\end{equation}
		where the sign of the trace of $B_i$ depends on whether $\nu=\overline{N}$ or $\nu=-\overline{N}$. 
	\end{lemm}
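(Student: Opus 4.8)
The plan is to work pointwise along $\partial_i\Sigma$ and differentiate the rotation relation $\overline{N} = \cos(\alpha_i)N + \sin(\alpha_i)\nu$ in directions $X$ tangent to $\partial_i\Sigma$. The essential role of the capillary condition is precisely that $\alpha_i$ is constant along $\partial_i\Sigma$, so the trigonometric coefficients pass through the ambient connection $D$ untouched, giving $D_X\overline{N} = \cos(\alpha_i)\,D_X N + \sin(\alpha_i)\,D_X\nu$ for every $X$ tangent to $\partial_i\Sigma$.

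I would then evaluate the left-hand side using umbilicity. Since $\partial_i\Sigma \subset \mathbb{Q}^n(b_i)$, the vector $X$ is tangent to the umbilical hypersurface, and because $\overline{N}$ has constant length its derivative $D_X\overline{N}$ is tangent to $\mathbb{Q}^n(b_i)$; umbilicity upgrades this to $D_X\overline{N} = \overline{H}X$, where $\overline{H} = \sqrt{b_i - c}\geq 0$ by the relation $b_i = c + \overline{H}^2$ recorded above. The two assertions now come out by pairing this identity against $N$ and against a boundary vector $Y$, respectively. Pairing with $N$, the left-hand side vanishes because $X\perp N$; on the right $\langle D_X N, N\rangle = 0$, while differentiating $\langle \nu, N\rangle = 0$ gives $\langle D_X\nu, N\rangle = -A(X,\nu)$. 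One is thus left with $\sin(\alpha_i)A(X,\nu) = 0$, and since transversality forces $\alpha_i\notin\{0,\pi\}$, hence $\sin(\alpha_i)\neq 0$, this yields $i)$.

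Pairing the same identity against a vector $Y$ tangent to $\partial_i\Sigma$ produces $\overline{H}g(X,Y)$ on the left, while on the right $\langle D_X N, Y\rangle = A(X,Y)$ and $\langle D_X\nu, Y\rangle = g(\nabla_X\nu, Y) = B_i(X,Y)$, the $N$-component of $D_X\nu$ dropping out against $Y\perp N$; this is exactly $ii)$. The free boundary case is then the specialisation $\alpha_i = \pi/2$ or $\alpha_i = 3\pi/2$: here $\cos(\alpha_i) = 0$ and $\sin(\alpha_i) = \pm 1$, so $i)$ persists while $ii)$ collapses to $B_i = \pm\overline{H}g = \pm\sqrt{b_i - c}\,g$, the sign being $+$ when $\nu = \overline{N}$ and $-$ when $\nu = -\overline{N}$.

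The computation itself is short, so I expect the only genuine delicacy to be the bookkeeping of signs and orientations: fixing the convention $A = \langle D_- N, -\rangle$, checking that the $\overline{N}$ normalised so that $\overline{H}\geq 0$ yields $D_X\overline{N} = +\overline{H}X$ rather than $-\overline{H}X$, and confirming that the passage $(N,\nu)\mapsto(\overline{N},\overline{\nu})$ is the orientation-preserving rotation, so that the coefficients in $i)$ and $ii)$ emerge with the advertised signs.
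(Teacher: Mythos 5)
Your proposal is correct and follows essentially the same route as the paper: differentiate the relation $\overline{N}=\cos(\alpha_i)N+\sin(\alpha_i)\nu$ using the constancy of $\alpha_i$, invoke the umbilicity of $\mathbb{Q}^n(b_i)$, and contract against suitable vectors to obtain $i)$ and $ii)$, specialising to $\alpha_i=\pi/2,\,3\pi/2$ for the free boundary case. The only cosmetic difference is that for item $i)$ the paper contracts $D_X\overline{N}$ against $\overline{\nu}$ and uses $\langle D_X\overline{N},\overline{\nu}\rangle=\overline{H}\langle X,\overline{\nu}\rangle=0$, whereas you use $D_X\overline{N}=\overline{H}X$ and contract against $N$; the content is identical.
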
  
	\begin{proof}
		Let $X$ be a vector field tangent to $\partial_i \Sigma$. The two vector fields $X$ and $\overline{\nu}$ are tangent to $\mathbb{Q}^{n}(b_i)$ and orthogonal between themselves. Since $\mathbb{Q}^{n}(b_i)$ is an umbilical hypersurface,
		\begin{equation*}
			\langle D_X\overline{N},\overline{\nu}\rangle = \overline{H}\langle X,\overline{\nu}\rangle = 0.
		\end{equation*}  		
		On the other hand, substituting the expressions of $\overline{N}$ and $\overline{\nu}$ in terms of the orthonormal frame $N$ and $\nu$ and using that $\alpha_i$ is constant,
		\begin{align*}
			\langle D_X\overline{N},\overline{\nu}\rangle & = \cos(\alpha_i)^2\langle D_X N,\nu \rangle - \sin(\alpha_i)^2\langle D_X \nu, N\rangle \\ 
			& = \langle D_X N, \nu\rangle = A(X,\nu),
		\end{align*}
		by definition of $A$. Item $i)$ follows.	
			
		Item $ii)$ is also a consequence of a direct computation. For every vector fields $X$ and $Y$ tangent to $\partial_i\Sigma$,
		\begin{align*}
			\langle D_X \overline{N},Y\rangle & = \cos(\alpha_i)\langle D_X N,Y\rangle +\sin(\alpha_i)\langle D_X \nu,Y\rangle \\
			& = \cos(\alpha_i)A(X,Y) + \sin(\alpha_i)g(\nabla_X \nu,Y).
		\end{align*}
		In particular, when $\alpha_i=\pi/2$ or $\alpha_i=3\pi/2$, we have
		\begin{align*}
			B_i(X,Y) = g(\nabla_X \nu,Y)= \pm \langle D_X \overline{N},Y\rangle = \pm \overline{H}g(X,Y),
		\end{align*}
		because $\mathbb{Q}^n(b_i)$ is umbilical in $\mathbb{Q}^{n+1}(c)$. Since $b_i=c+\overline{H}^2$ and we have chosen orientations in such way that $\overline{H}\geq 0$, the result follows.
	\end{proof}		
	
	In dimension $n=2$, we can proceed our analysis of the free boundary condition a bit further, and derive properties of the squared-norm of $\mathring{A}$ along the boundary that will be crucial for our reasoning.
	
	Denote by $T$ a unit tangent vector field along the boundary component $\partial_i\Sigma$, so that $T$ and $\nu$ form an orthonormal frame along $\partial_i \Sigma$. Since $\mathring{A}$ has zero trace with respect to $g$ by definition, 
	\begin{equation}\label{eqmathringtrace}
		\mathring{A}(T,T)+\mathring{A}(\nu,\nu)=0.
	\end{equation}
	Moreover, by Lemma \ref{lemma},
	\begin{equation}\label{eqmathringTnu}
		\mathring{A}(T,\nu)=A(T,\nu)+Hg(T,\nu)=0.
	\end{equation}
	Therefore
	\begin{equation}\label{eqnormofA}
		|\mathring{A}|^2 = \mathring{A}(T,T)^2+\mathring{A}(\nu,\nu)^2+2\mathring{A}(T,\nu)^2 = 2\mathring{A}(T,T)^2.
	\end{equation}
	
	Since $\nabla_T T$ is ortogonal to $T$, $\nabla_T \nu$ is orthogonal to $\nu$, and these vector fields satisfy
	\begin{equation*}
		g(\nabla_T T,\nu) = -g(T,\nabla_T \nu)=-B_i(T,T) =\mp\sqrt{b_i-c}
	\end{equation*}
	because of Lemma \ref{lemma}, we have
	\begin{equation}\label{eqnablas}
		\nabla_T \nu =  \pm\sqrt{b_i-c}\,T \quad \text{and} \quad \nabla_T T = \mp  \sqrt{b_i-c}\,\nu.
	\end{equation}

	\begin{lemm}\label{lemmaBen}
		Under the free boundary condition along $\partial_i \Sigma \subset \mathbb{Q}^2(b_i)$,
		\begin{equation}\label{eq|A|bdry}
			-\frac{\partial}{\partial \nu}|\mathring{A}|^2 = \pm 4\sqrt{b_i-c}|\mathring{A}|^{2} \quad \text{on} \quad \partial_i \Sigma,
		\end{equation}
		where the sign depends on whether $\nu=\overline{N}$ or $\nu=-\overline{N}$.
	\end{lemm}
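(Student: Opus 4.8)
The plan is to compute the normal derivative by pairing $\nabla_\nu\mathring A$ against $\mathring A$ and then converting the normal derivative of $\mathring A$ into a tangential one via the Codazzi equation, after which all the algebra collapses thanks to the boundary identities already recorded in \eqref{eqmathringtrace}, \eqref{eqmathringTnu}, \eqref{eqnormofA} and \eqref{eqnablas}.

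First I would use metric compatibility to write, along $\partial_i\Sigma$ and in the orthonormal frame $\{T,\nu\}$,
\begin{equation*}
\frac{1}{2}\frac{\partial}{\partial\nu}|\mathring A|^2 = (\nabla_\nu\mathring A)(T,T)\mathring A(T,T) + 2(\nabla_\nu\mathring A)(T,\nu)\mathring A(T,\nu) + (\nabla_\nu\mathring A)(\nu,\nu)\mathring A(\nu,\nu).
\end{equation*}
On $\partial_i\Sigma$ the middle term drops out because $\mathring A(T,\nu)=0$ by \eqref{eqmathringTnu}, and $\mathring A(\nu,\nu)=-\mathring A(T,T)$ by \eqref{eqmathringtrace}. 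Differentiating the trace-free relation gives $(\nabla_\nu\mathring A)(\nu,\nu)=-(\nabla_\nu\mathring A)(T,T)$, so the whole expression reduces to $2(\nabla_\nu\mathring A)(T,T)\mathring A(T,T)$.

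Next I would exploit that $H$ is constant, so that $\mathring A$, and not only $A$, is a Codazzi tensor; applying the Codazzi equation with $(X,Y,Z)=(\nu,T,T)$ yields $(\nabla_\nu\mathring A)(T,T)=(\nabla_T\mathring A)(\nu,T)$, trading the normal derivative for a tangential one. Expanding by the Leibniz rule,
\begin{equation*}
(\nabla_T\mathring A)(\nu,T) = T\big(\mathring A(\nu,T)\big) - \mathring A(\nabla_T\nu,T) - \mathring A(\nu,\nabla_T T),
\end{equation*}
and here the scalar $\mathring A(\nu,T)$ vanishes \emph{identically} along $\partial_i\Sigma$, so its tangential derivative $T(\mathring A(\nu,T))$ is zero. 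Substituting $\nabla_T\nu=\pm\sqrt{b_i-c}\,T$ and $\nabla_T T=\mp\sqrt{b_i-c}\,\nu$ from \eqref{eqnablas}, together with $\mathring A(\nu,\nu)=-\mathring A(T,T)$, gives $(\nabla_T\mathring A)(\nu,T)=\mp 2\sqrt{b_i-c}\,\mathring A(T,T)$. Feeding this back and using $|\mathring A|^2=2\mathring A(T,T)^2$ from \eqref{eqnormofA} produces exactly \eqref{eq|A|bdry}.

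The step I expect to be most delicate is the vanishing of $T(\mathring A(\nu,T))$: it is crucial to use that $\mathring A(\nu,T)\equiv 0$ on the whole boundary curve (not merely at the point under consideration), which is precisely what licenses discarding the tangential derivative; and the reduction relies on $H$ being constant, so that $\mathring A$ inherits the Codazzi property. Keeping track of the two sign conventions---whether $\nu=\overline N$ or $\nu=-\overline N$---through \eqref{eqnablas} is what yields the $\pm$ in the final identity.
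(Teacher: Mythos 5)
Your proposal is correct and follows essentially the same route as the paper's own proof: expand $\tfrac{1}{2}\partial_\nu|\mathring{A}|^2$ in the frame $\{T,\nu\}$, reduce to $2(\nabla_\nu\mathring{A})(T,T)\mathring{A}(T,T)$ via the boundary identities and trace-freeness, trade the normal derivative for a tangential one by the Codazzi equation (using that $H$ is constant), and finish with the Leibniz rule together with \eqref{eqnablas} and \eqref{eqnormofA}. The delicate points you flag---$T(\mathring{A}(\nu,T))=0$ because $\mathring{A}(\nu,T)$ vanishes identically along the boundary, and $\nabla A=\nabla\mathring{A}$---are exactly the ones the paper relies on.
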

	
	\begin{proof}
		Using \eqref{eqmathringtrace} and \eqref{eqmathringTnu},
		\begin{align*}
			\frac{1}{2}\frac{\partial}{\partial \nu}|\mathring{A}|^2 = & \, g(\nabla_\nu \mathring{A},\mathring{A}) \\
			= &\, (\nabla_\nu \mathring{A})(T,T)\mathring{A}(T,T) + (\nabla_\nu \mathring{A})(\nu,\nu)\mathring{A}(\nu,\nu) \\
			  &\, + 2(\nabla_\nu \mathring{A})(T,\nu)\mathring{A}(T,\nu) \\
			= &\,  2(\nabla_\nu \mathring{A})(T,T)\mathring{A}(T,T) = 2 (\nabla_T \mathring{A})(\nu,T)\mathring{A}(T,T),
		\end{align*}
	by the Codazzi equation. (Since $H$ is constant, $\nabla A = \nabla \mathring{A})$.
	
	By definition,
	\begin{equation*}
	  (\nabla_T \mathring{A})(\nu,T) = T \mathring{A}(\nu,T) - \mathring{A}(\nabla_T\nu,T)-\mathring{A}(\nu,\nabla_T T).	
	\end{equation*}	
	Substituting \eqref{eqmathringTnu}, \eqref{eqnormofA}, \eqref{eqmathringtrace} and \eqref{eqnablas} successively, we obtain
	\begin{align*}
		-\frac{\partial}{\partial \nu}|\mathring{A}|^2 = & \pm 4\sqrt{b_i-c}\left(\mathring{A}(T,T) - \mathring{A}(\nu,\nu) \right) \mathring{A}(T,T) \\
	= & \pm 8\sqrt{b_i-c}\mathring{A}(T,T)^2 = \pm 4\sqrt{b_i-c}|\mathring{A}|^{2}.
	\end{align*}
	\end{proof}

\subsection{Dimension two} Let $(\Sigma^2,g)$ be a connected, oriented Riemannian surface with non-empty boundary $\partial \Sigma$, not necessarily complete. Our generalisation of Theorem A under the free boundary condition reads as follows: 
		
	\begin{thmC}
		Assume that $(\Sigma^2,g)$ with Gaussian curvature $K$ is isometrically immersed into $\mathbb{Q}^3(c)$ with constant mean curvature $H$ and in such way that it intersects a collection of umbilical $\mathbb{Q}^{2}(b_i)$ orthogonally along the boundary components $\partial_i \Sigma$. Then $c+H^2-K$ is non-negative, vanishes on a discrete set of points, and on the complement of its zero set the Riemannian metric $\sqrt{c+H^2-K}g$ is flat and such that each $\partial_i\Sigma$ has zero geodesic curvature with respect to it.
		
		Conversely, let $(\Sigma^2,g)$ be such that its boundary components $\partial_i \Sigma$ have constant geodesic curvature $k_i=\pm\sqrt{b_i-c}$ for some constants $b_i\geq c$, and assume  that there exists a constant $H$ such that the function $c+H^2-K$ is positive everywhere, that the Riemannian metric $\sqrt{c+H^2-K}g$ is flat and that the boundary components are geodesics with respect to that metric. Then $(\Sigma^2,g)$ admits, locally, constant mean curvature $H$ isometric immersions into $\mathbb{Q}^3(c)$ that meet a collection of umbilical $\mathbb{Q}^{2}(b_i)$ orthogonally on points of the boundary component $\partial_i\Sigma$.
	\end{thmC}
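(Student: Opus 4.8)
The forward implication should be quick: the assertions that $c+H^2-K\ge 0$, vanishes on a discrete set, and that $\hat g=\sqrt{c+H^2-K}\,g$ is flat are verbatim the content of Theorem A, while the new claim that each $\partial_i\Sigma$ is $\hat g$-geodesic follows from Lemma \ref{lemma}(i): the identity $A(X,\nu)=0$ gives $\mathring A(T,\nu)=0$, which (through \eqref{eqholomorphic}) says exactly that the Hopf differential $\phi\,dz^2$ is real along the boundary, and in the $\hat g$-flat coordinate this forces the boundary tangent to have constant direction, i.e.\ $\partial_i\Sigma$ is a straight segment. So I would concentrate on the converse. The plan is to follow the converse of Theorem A, manufacturing $A$ from a holomorphic function $\phi$, and to use the single extra hypothesis---that $\partial_i\Sigma$ be $\hat g$-geodesic---to arrange that $\partial_i\Sigma$ becomes a line of curvature. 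Working locally near a boundary point, I would take an isothermal coordinate $z=x+iy$ on a half-disc with $g=e^{2u}(dx^2+dy^2)$ and $\partial_i\Sigma$ on the real axis. Since $c+H^2-K>0$, the flatness of $\hat g$ together with Lemma \ref{lemlogharmonic} produces, exactly as in Theorem A, a \emph{nowhere-vanishing} holomorphic $\phi$ with $|\phi|^2=(c+H^2-K)e^{4u}$; equivalently $\hat g=|\phi|\,|dz|^2$.

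The first key step is to read the geodesic hypothesis as a condition on $\phi$. Introducing the flat coordinate $\zeta=\int\sqrt{\phi}\,dz$ (well defined on the simply connected half-disc because $\phi$ has no zeros), one has $\hat g=|d\zeta|^2$, so $\hat g$-geodesics are straight segments in the $\zeta$-plane. Parametrising $\partial_i\Sigma$ by $z(s)$, the geodesic condition therefore reads: $\arg\!\big(\sqrt{\phi}\,z'(s)\big)$, hence $\arg\!\big(\phi\,z'(s)^2\big)$, is constant along $\partial_i\Sigma$. On the other hand a direct computation with \eqref{eqholomorphic} gives $\mathring A(T,\nu)=-\,\mathrm{Im}\big(\phi\,z'(s)^2\big)$, so $\partial_i\Sigma$ is a line of curvature of $\mathring A$ precisely when $\phi\,z'(s)^2$ is real. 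Since $|\phi|^2$ determines $\phi$ only up to a unimodular constant, I would spend this gauge freedom: replace $\phi$ by $e^{i\gamma}\phi$ for the constant $\gamma$ that rotates the (constant) value of $\arg(\phi z'(s)^2)$ to $0$. With this normalisation $\mathring A(T,\nu)=0$, and as $g(T,\nu)=0$ also $A(T,\nu)=0$.

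With $\phi$ so normalised, I would define $A$ by the formulas of Theorem A; as there, $(g,A)$ solves the Gauss and Codazzi equations for a constant mean curvature $H$ immersion, and the Fundamental Theorem of Isometric Immersions yields an immersion $F$ of the half-disc into $\mathbb{Q}^3(c)$ with unit normal $N$. Along $\partial_i\Sigma$ we then have both pieces of boundary data of \eqref{eqfund}: first $A(T,\nu)=0$ from the previous step; and second, since by hypothesis $\partial_i\Sigma$ has constant geodesic curvature $k_i=\pm\sqrt{b_i-c}$, the intrinsic identity $B_i=k_i\,g$, i.e.\ $\nabla_T\nu=k_i\,T$ as in \eqref{eqnablas}. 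Combining these through the Weingarten formula of $F$ produces the clean relation
\[
	D_T\nu=\nabla_T\nu+A(T,\nu)N=\pm\sqrt{b_i-c}\,T,
\]
where $D$ is the Levi-Civita connection of $\mathbb{Q}^3(c)$.

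The final step---which I expect to be the crux---is to recognise $D_T\nu=\pm\sqrt{b_i-c}\,T$ as the Weingarten equation, in the direction $T$, of an umbilical surface of mean curvature $\overline H=\sqrt{b_i-c}$ with unit normal $\nu$, and to \emph{integrate} it along $c_i=F\circ\partial_i\Sigma$. The point is that this ODE forces the ``centre'' of the osculating umbilical surface to be constant in $s$: in the Euclidean model $\nu'=\lambda c_i'$ pins $c_i$ to a fixed sphere (or plane when $\lambda=0$) with $\nu$ radial, and the same computation carries over to $\mathbb{Q}^3(c)$ via the standard description of the umbilical $\mathbb{Q}^2(b_i)$ as geodesic distance spheres, with $b_i=c+\overline H^2$. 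Hence $c_i$ lies on a single umbilical $\mathbb{Q}^2(b_i)$ whose unit normal coincides with $\nu$ along $\partial_i\Sigma$, which is exactly the statement that $F$ meets $\mathbb{Q}^2(b_i)$ orthogonally there. I expect the genuine difficulties to be, first, promoting the pointwise Weingarten identity to a single fixed umbilical surface containing the whole curve, and second, the bookkeeping of orientations (the sign of $k_i$ selecting $\nu=\overline N$ or $\nu=-\overline N$); it is worth noting that the gauge rotation of $\phi$ performed above is precisely what upgrades the a priori constant-contact-angle (capillary) situation into the orthogonal one demanded here.
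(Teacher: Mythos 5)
Your proposal is correct, and its skeleton (holomorphic data from flatness, construction of $A$, Fundamental Theorem of Isometric Immersions, identification of the boundary image via the parallel conormal) is the same as the paper's; but you treat both boundary conditions by a genuinely different mechanism. In the converse, where the paper translates the geodesic hypothesis into the Neumann condition $\partial_y \log((c+H^2-K)e^{4u})=0$ and invokes part $ii)$ of Lemma \ref{lemlogharmonic} to obtain a $\phi$ with vanishing imaginary part on $\{y=0\}$, you invoke only the interior statement and then spend the unimodular gauge freedom of $\phi$, reading the geodesic hypothesis in the flat coordinate $\zeta=\int \sqrt{\phi}\,dz$ as constancy of $\arg(\phi\, z'^2)$. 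The two translations are equivalent (by Cauchy--Riemann, constancy of $\arg \phi$ along the boundary is the Neumann condition for $\log|\phi|$), and in fact the paper's proof of Lemma \ref{lemlogharmonic} $ii)$ ends with exactly such a constant rotation, so your argument redistributes that work rather than omits it. Likewise, in the forward direction you bypass Lemma \ref{lemmaBen} and the conformal formula \eqref{eqconformal2} entirely: by Lemma \ref{lemma} the boundary is a curvature line, hence the Hopf differential is real along it, hence the boundary is straight in the flat coordinate. This is a clean classical alternative to the paper's computation. For the last step, the paper simply cites standard results (Exercise 2.9 in \cite{DajToj}, Theorem 1 in \cite{Yau}): a curve with parallel unit conormal $\nu$ satisfying $D_T\nu=\pm\sqrt{b_i-c}\,T$ lies in a fixed umbilical $\mathbb{Q}^2(b_i)$ orthogonal to $\nu$. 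Your proposed ODE integration, $(\nu\mp\sqrt{b_i-c}\,c_i)'=0$ in the model of $\mathbb{Q}^3(c)$ as a quadric in a flat ambient space, is precisely the proof of that cited fact; what you flag as ``the crux'' is routine.

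Two caveats. First, in the forward direction, the discreteness of the zero set of $c+H^2-K$ is \emph{not} verbatim Theorem A: Theorem A controls interior zeros only, and a priori zeros could accumulate at $\partial\Sigma$. The paper rules this out by Schwarz reflection, using precisely that $\mathrm{Im}\,\phi=0$ on $\{y=0\}$ --- an ingredient you already have, so this gap is easily filled, but it should be stated. Second, your closing aside is inaccurate: by Lemma \ref{lemma} $i)$, capillary (constant-angle) boundaries are \emph{also} curvature lines with $A(T,\nu)=0$, so an un-rotated $\phi$, with $\arg\phi$ equal to a nonzero constant on the boundary, does not correspond to a capillary configuration; the gauge rotation is needed simply because the construction of $A$ from $\phi$ yields $A(T,\nu)=0$ only when $\mathrm{Im}\,\phi=0$ there. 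Since this remark carries no weight in the argument, it does not affect the validity of your proof.
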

	
	\begin{proof}
		$(\Rightarrow)$ In the proof of Theorem A, we have already analysed the neighbourhoods of points in $\Sigma\setminus \partial \Sigma$. Consider isothermal coordinates $z=x+iy$ in a neighbourhood of a boundary point in $\partial_i\Sigma$, compatible with the underlying Riemann surface structure of $(\Sigma^2,g)$, in such way that the points with $y>0$ are interior points and $\{y=0\}$ parametrises boundary points. 
		
	In this coordinate system, at each point, $\nu$ is parallel to $-\partial_y$ and $\partial_x$ is tangent to $\partial_i \Sigma$ on $y=0$. By Lemma \ref{lemma}, $A(\partial_x,\partial_y)=0$ on $y=0$. Hence, the holomorphic function $\phi$ defined in \eqref{eqholomorphic} is such that its imaginary part vanishes at $y=0$. By Schwartz reflection, $\phi$ extends as a holomorphic function in the reflected domain across $\{y=0\}$. In particular, $\phi$ vanishes at isolated boundary points, or vanishes identically. Since $\phi$ vanishes precisely at points where $\mathring{A}=0$, the same can be said about the zero set of $\sqrt{c+H^2-K}=|\mathring{A}|^2/2$.
	
	At boundary points where $\mathring{A}\neq 0$, the conformal factor $e^{2u}=|\mathring{A}|$, which turns $\overline{g}=e^{2u}g$ into a flat metric as in the proof of Theorem A, also satisfies
	\begin{equation*}
		\frac{\partial}{\partial \nu} \log |\mathring{A}| \pm 2\sqrt{b_i-c^2} = 0,
	\end{equation*} 
	by virtue of Lemma \ref{lemmaBen}. Recalling the equation that relates the geodesic curvature of $\partial_i \Sigma$ in $\Sigma^2$ (with respect to $\nu$) for  conformal metrics $h=e^{2u}g$, namely
	\begin{equation} \label{eqconformal2}
		\frac{\partial u}{\partial \nu_g} + k_g = k_h e^{u}, 
	\end{equation}		 
	and since the geodesic curvature of $\partial_i \Sigma$ in $(\Sigma^2,g)$ is precisely $\pm\sqrt{b_i-c}$ by virtue of Lemma \ref{lemma}, the geodesic curvature of $\partial_i \Sigma$ with respect to the conformal metric $|\mathring{A}|g$ is zero. The same can be said about $\partial_i \Sigma$ with respect to the conformal metric $\overline{g}=\sqrt{c+H^2-K}g=(|\mathring{A}|/\sqrt{2})g$.
	
	$(\Leftarrow)$ For simply connected neighbourhoods of interior points of $\Sigma^2$, we argue as in the proof of Theorem A. Consider then local isothermal coordinates $z=x+iy$ on a half-disc on $(\Sigma^2,g)$, compatible with the underlying Riemann surface structure, so that interior points correspond to $y>0$ and boundary points of $\partial_i \Sigma$ correspond to $y=0$. We have $g=e^{2u}(dx^2+dy^2)$ for a smooth function $u$.
	
	 By assumption, $(\Sigma^2,\sqrt{c+H^2-K}g)$ has zero Gaussian curvature and its boundary has zero geodesic curvature. Equivalently, in view of \eqref{eqconformal1} and \eqref{eqconformal2},
	 \begin{equation*}	
		\Delta \log((c+H^2-K)e^{4u}) = 0 \quad \text{and} \quad \frac{\partial}{\partial y} \log((c+H^2-K)e^{4u}) = 0,
	 \end{equation*} 
	where $\Delta=\partial_x^2+\partial_y^2$. By Lemma \ref{lemlogharmonic} (see Appendix), there exists a holomorphic function $\phi(z)$ on this coordinate neighbourhood that has zero imaginary part on $\{y=0\}$ and is such that that
	 \begin{equation*}
	 	|\phi|^2 = (c+H^2-K)e^{4u}.
	 \end{equation*}

	As in the proof of Theorem A, let $A$ be the two-tensor defined in this coordinates system as follows:
	 \begin{align*}
	 	A(\partial_x,\partial_x) & = Re(\phi) + He^{2u}, \\
	 	A(\partial_y,\partial_y) & = -Re(\phi) + He^{2u}, \\
	 	A(\partial_x,\partial_y) & = -Im(\phi)  = A(\partial_y,\partial_x).
	 \end{align*}
	 Then, $A$ is a symmetric two-tensor, with average trace $H$, whose traceless part has squared-norm $2|\phi|^2e^{-4u}$ with respect to the metric $g=e^{2u}(dx^2+dy^2)$. Moreover, since $\phi$ has zero imaginary part on $\{y=0\}$, $A(\partial_x,\partial_y)=0$ on this set.
	 
	 As in the proof of Theorem $C$, the pair $(g,A)$ satisfies the Gauss and Codazzi equations for a constant mean curvature $H$ isometric immersion into $\mathbb{Q}^3(c)$ on this neighbourhood. By the Fundamental Theorem of Isometric Immersions, this simply connected region isometrically immerses into $\mathbb{Q}^3(c)$ with second fundamental form $A$ and constant mean curvature $H$. Denote by $N$ the unit normal with $A=\langle D_{-}N,-\rangle$, and by $T$ a unit vector field tangent to $\partial_i\Sigma$ in points of this region.
	 
	 It remains to compute the image of the points of $\partial_i \Sigma$ under this immersion. First of all, notice that $A(T,\nu)=0$ along points of $\partial \Sigma$ in this neighbourhood. In fact, $\nu=-e^{-u}\partial_y$ and $T=\pm e^{-u}\partial_x$ at each boundary point, so that the vanishing of $A(T,\nu)$ follows from the definition of $A$ and the vanishing of the imaginary part of $\phi$. 
	 
	 It follows that $\nu$ is a parallel section of the normal bundle of $\partial_i \Sigma$ in $\mathbb{Q}^{3}(c)$. In fact, $D_T \nu$ has no normal component because $D_T \nu$ is orthogonal to $\nu$, and $\langle D_T \nu,N\rangle = -\langle D_T N,\nu\rangle=-A(\nu,T)=0$.
	 
	 Moreover, $\langle D_T \nu,T\rangle = g(\nabla_T\nu,T)$ is the geodesic curvature of $\partial_i \Sigma$ in $(\Sigma^2,g)$, which is constant and equal to $\pm \sqrt{b_i-c}$ by assumption.
	 
	Hence, standard results imply that the boundary points in question lie inside an umbilical $\mathbb{Q}^{2}(x_i)$ in $\mathbb{Q}^{n+1}(c)$, whose Gaussian curvature is $x_i = c + (\pm\sqrt{b_i-c})^2 = b_i$, and moreover $\nu$ is normal to $\mathbb{Q}^{2}(x_i)$ (see Exercise 2.9 of \cite{DajToj}, or Theorem 1 of \cite{Yau}). In other words, $\Sigma$ meets an umbilical $\mathbb{Q}^2(b_i)$ orthogonally along the boundary points of $\partial_i \Sigma$ in this region, as we wanted to prove. 
	\end{proof}
	
\subsection{The free boundary correspondence} \label{subsectionfreeboundarycorrespondence} Theorem A implies a version of Lawson's correspondence for CMC surfaces in three-dimensional space forms meeting umbilical surfaces orthogonally.

	\begin{thmE}
		Let $(c,H,\{b_i\})$, $(\tilde{c},\tilde{H},\{\tilde{b}_i\})$ be ordered real numbers such that $c+H^2=\tilde{c}+\tilde{H}^2$ and $b_i-c=\tilde{b}_i-\tilde{c}\geq 0$ for all $i$. If a simply connected surface with boundary $(\Sigma^2,g)$ admits a constant mean curvature $H$ isometric immersion into $\mathbb{Q}^3(c)$ meeting a collection of umbilical surface $\mathbb{Q}^2(b_i)$ orthogonally along a boundary component $\partial_i \Sigma$, then it also admits a constant mean curvature $\tilde{H}$ isometric immersion into $\mathbb{Q}^3(\tilde{c})$ meeting a collection of umbilical $\mathbb{Q}^2(\tilde{b}_i)$ orthogonally along the boundary component $\partial_i \Sigma$.
	\end{thmE}
	
\begin{exam}
	Free boundary minimal surfaces of revolution with free boundary in the Euclidean sphere of radius one centred at the origin have also been classified: up to rotations, there is only one piece of catenoid with this property. This piece is called the \textit{critical catenoid}. According to the above correspondence, the critical catenoid is also isometrically immersed into the hyperbolic space as a piece of a catenoid cousin that meets (possibly different) \textit{horospheres} orthogonally along each boundary component. 
\end{exam}
		
		\subsection{Higher dimensions} Let $(\Sigma^n,g)$ denote a connected, oriented Riemannian surface with non-empty boundary $\partial \Sigma$, not necessarily complete. The higher dimensional generalisation of Theorem B reads as follows: 

\begin{thmD}
		Assume $(\Sigma^n,g)$ is isometrically immersed into $\mathbb{Q}^{n+1}(c)$ as a minimal hypersurface that intersects a collection of umbilical $\mathbb{Q}^{n}(b_i)$ orthogonally along the boundary components $\partial_i \Sigma$, and denote by $A$ its second fundamental form. Then, each boundary component $\partial_i \Sigma$ is umbilical in $(\Sigma^n,g)$ with second fundamental form $B_i=\pm\sqrt{b_i-c} \,g$ with respect to the outward pointing unit conormal $\nu$. Moreover, $A(X,\nu)=0$ for all vector fields $X$ tangent to $\partial \Sigma$. Finally, the symmetric two-tensor $\overline{g} = c(n-1)g - Ric_g$ is non-negative and, wherever it is positive definite, it is a Riemannian metric such that
		\begin{itemize}
			\item[$i)$] $\overline{g}(\overline{\nabla}_X Y,Z) = g(\nabla_X (\hat{A}(Y)),\hat{A}(Z))$ for every $X$, $Y$, $Z\in \mathcal{X}(\Sigma)$,
			\item[$ii)$] $Rm_{\overline{g}} = \frac{1}{2}\overline{g}\odot \overline{g} + \frac{c}{2} A\odot A$.
		\end{itemize}
		
		Conversely, assume that $(\Sigma^n,g)$ has boundary components $\partial_i \Sigma$ that are umbilical in $(\Sigma^n,g)$ with $B_i = \pm\sqrt{b_i -c} g$ for some constants $b_i\geq c$, that the symmetric tensor $\overline{g}=c(n-1)g-Ric_g$ is a Riemannian metric on $\Sigma^n$, and that there exists a smooth symmetric two-tensor $A$ on $\Sigma^n$, with $A(X,\nu)=0$ for every vector field $X$ tangent to $\partial \Sigma$, such that $\overline{g}=A\circ A$ satisfies conditions $i)$ and $ii)$.	Then $(\Sigma^n,g)$ admits, locally, minimal immersions into $\mathbb{Q}^{n+1}(c)$ that intersect a collection of umbilical $\mathbb{Q}^{n}(b_i)$ in right angles along boundary points in $\partial_i \Sigma$.
	\end{thmD}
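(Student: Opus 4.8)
The plan is to reduce everything to the boundaryless case settled in Theorem B, handling the free boundary separately via the algebraic relations of Lemma \ref{lemma} and the boundary reconstruction already used in the proof of Theorem C.

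For the forward implication, the three interior conclusions---non-negativity of $\overline{g} = c(n-1)g - Ric_g$ and the identities $i)$ and $ii)$---are purely intrinsic and make no reference to $\partial\Sigma$; since $\Sigma^n$ is minimal, they are exactly the forward statement of Theorem B, which I would invoke verbatim. The two boundary conclusions are then immediate from Lemma \ref{lemma}: specialising \eqref{eqfund} to the free boundary case gives simultaneously $A(X,\nu)=0$ for $X$ tangent to $\partial\Sigma$ and the umbilicity $B_i = \pm\sqrt{b_i-c}\,g$, the sign being determined by whether $\nu=\overline{N}$ or $\nu=-\overline{N}$.

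For the converse, I would first apply the converse of Theorem B on a simply connected neighbourhood of a boundary point: since $\overline{g}=A\circ A$ is assumed positive definite and to satisfy $i)$ and $ii)$, the pair $(g,A)$ solves the Gauss and Codazzi equations, and the Fundamental Theorem of Isometric Immersions produces a local immersion of $\Sigma^n$ into $\mathbb{Q}^{n+1}(c)$ with second fundamental form $A$; as in Theorem B, comparing $Ric_g = c(n-1)g - A\circ A$ with \eqref{eqGauss2} and using the invertibility of $\hat{A}$ forces $H=0$, so the immersion is minimal. It then remains to check the boundary behaviour. Mimicking the proof of Theorem C, I would use $A(X,\nu)=0$ to show that $\nu$ is a parallel section of the rank-two normal bundle of $\partial_i\Sigma$ in $\mathbb{Q}^{n+1}(c)$---its $\nu$-component vanishes for norm reasons and its $N$-component equals $\langle D_X\nu, N\rangle = -A(X,\nu)=0$, so $D_X\nu$ is tangent to $\partial_i\Sigma$. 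The same computation identifies the shape operator of $\partial_i\Sigma$ in the direction $\nu$ with $B_i = \pm\sqrt{b_i-c}\,g$, a constant multiple of the identity.

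The hard part will be promoting this infinitesimal data---a parallel unit normal with constant umbilical shape operator---to the conclusion that $\partial_i\Sigma$ genuinely lies inside an umbilical hypersurface $\mathbb{Q}^n(b_i)$ with $\nu$ normal to it. In dimension two this was automatic since $\partial_i\Sigma$ is a curve, but for $n\geq 3$ one needs the reduction-of-codimension structure theory for submanifolds carrying a parallel umbilical normal field. I would invoke the standard result (Exercise 2.9 of \cite{DajToj}, or Theorem 1 of \cite{Yau}): a submanifold of $\mathbb{Q}^{n+1}(c)$ with a parallel unit normal $\nu$ whose shape operator equals $\lambda\,\mathrm{Id}$ for a constant $\lambda$ lies in an umbilical hypersurface of intrinsic curvature $c+\lambda^2$, normal to $\nu$. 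Taking $\lambda=\pm\sqrt{b_i-c}$ gives $c+\lambda^2 = b_i$, so the immersion meets $\mathbb{Q}^n(b_i)$ orthogonally along $\partial_i\Sigma$, which is precisely the free boundary condition.
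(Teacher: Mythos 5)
Your proposal is correct and follows essentially the same route as the paper: the forward direction by combining Theorem B with Lemma \ref{lemma}, and the converse by applying Theorem B's converse to get local minimal immersions, then verifying that $\nu$ is parallel in the normal bundle and that its shape operator is $\pm\sqrt{b_i-c}\,\mathrm{Id}$, and finally invoking the reduction result of Exercise 2.9 in \cite{DajToj} and Theorem 1 of \cite{Yau} to place $\partial_i\Sigma$ inside an umbilical $\mathbb{Q}^n(b_i)$ orthogonal to $\nu$. The paper's own argument is exactly this, so no gaps remain.
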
	

\begin{proof}
	$(\Rightarrow)$ The necessity of all these properties is clear from the proof of the boundary-less case (Theorem B) and Lemma \ref{lemma}. 
	
	$(\Leftarrow)$ Given the hypotheses, the proof of Theorem B guaranties that the pair $(g,A)$ satisfies the Gauss and Codazzi equations for a minimal isometric immersion into $\mathbb{Q}^{n+1}(c)$. Thus, every point of $\Sigma^n$, including boundary points, belongs to a simply connected neighbourhood that admits a minimal isometric immersion into $\mathbb{Q}^{n}(c)$. It remains only to understand how the boundary components $\partial_i \Sigma$ are mapped inside $\mathbb{Q}^{n+1}(c)$ by these immersions.
	
	Denote by $N$ the unit normal so that $A=\langle D_{-}N,{-}\rangle$. The unit conormal $\nu$ of $\partial_i \Sigma$ in $\Sigma^n$ and the unit normal $N$ of $\Sigma^n$ along $\partial_i \Sigma$ form an orthonormal frame of the normal bundle of $\partial_i \Sigma$ in $\mathbb{Q}^{n+1}(c)$. The section $\nu$ satisfies two properties:
	
	$a)$ $\nu$ is parallel with respect to the normal connection. In fact, by assumption,
	\begin{equation*}
		\langle D_X \nu,N\rangle = -\langle D_X N,\nu\rangle = -A(X,\nu) = 0
	\end{equation*}	
	for all vector fields $X$ tangent to $\partial_i \Sigma$. Also, $\langle \nabla_X \nu,\nu\rangle =X|\nu|^2/2 = 0$, so that there is no component of $\nabla_X \nu$ in the direction of $\nu$ either. Hence, the normal component of $\nabla_X \nu$ is zero for all such vector fields $X$.
	
	$b)$ By the assumption that $\partial_i \Sigma$ is an umbilical hypersurface of $(\Sigma^n,g)$, for all vector fields $X$ and $Y$ tangent to $\partial_i \Sigma$,
	\begin{equation*}
		\langle D_X \nu ,Y \rangle = g(\nabla_X^{\Sigma} \nu,Y) = \pm \sqrt{b_i -c}g(X,Y) = \pm \sqrt{b_i -c}\langle X,Y\rangle.
	\end{equation*}
	
	Standard results now imply that $\partial_i \Sigma$ lies inside an umbilical $\mathbb{Q}^{n}(x_i)$ in $\mathbb{Q}^{n+1}(c)$, whose sectional curvature is $x_i = c + (\pm\sqrt{b_i-c})^2 = b_i$, and moreover $\nu$ is normal to $\mathbb{Q}^{n}(x_i)$ (see again Exercise 2.9 of \cite{DajToj} and Theorem 1 of \cite{Yau}). In other words, under this local isometric immersion, $\Sigma^n$ meets an umbilical $\mathbb{Q}^n(b_i)$ orthogonally along the points corresponding to the boundary component $\partial_i \Sigma$.
\end{proof}

\section{Other intrinsic properties}
	
	To conclude this article, we would like to mention briefly two notions related to intrinsic properties of minimal surfaces in $\mathbb{R}^3$ meeting spheres of radius one orthogonally along their boundary components, from their inside.	
	
\subsection{Ricci surfaces with boundary}\label{subsecRicci}

	Let $(\Sigma^2,g)$ be a surface with negative Gaussian curvature $K$. It is straighforward to check that the equation that says $\sqrt{-K}g$ is a flat metric, namely
	\begin{equation*}
		-\Delta \log(-K) + 4K = 0,
	\end{equation*}
	is equivalent to the equation
	\begin{equation} \label{eqMoroianu}
		-K\Delta K + |\nabla K|^2 + 4K^3 = 0 .
	\end{equation}
	 But there is a difference between the two equations: the latter makes sense even where $K=0$. In \cite{MorMor}, Andrei Moroianu and Sergiu Moroianu noticed that, somehow, equation \eqref{eqMoroianu} contained more information than the original Ricci condition, in the sense that it forces a specific behaviour of its solutions near a zero. (Beware their different convention for the Laplacian). As a consequence, they were able to prove a very satisfactory equivalence:

	\begin{thm}[\textit{cf}. Theorem 2 in \cite{MorMor}]\label{thmMorMor}
		The following assertions about a Riemannian surface $(\Sigma^2,g)$ (without boundary) are equivalent.
		\begin{itemize}
			\item[$i)$] $(\Sigma^2,g)$ admits local minimal isometric immersions into $\mathbb{R}^3$.	
			\item[$ii)$] $(\Sigma^2,g)$ has non-positive Gaussian curvature $K$, which is a solution to equation \eqref{eqMoroianu}.
		\end{itemize}
	\end{thm}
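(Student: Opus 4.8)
The plan is to use Theorem A, specialised to $c=H=0$, as the bridge between the two conditions, so that the only genuinely new analysis concerns the behaviour of $K$ at its zeros. The starting observation is the one already recorded above: away from the zero set of $K$, equation \eqref{eqMoroianu} is obtained from the flatness equation $-\Delta\log(-K)+4K=0$ for $\sqrt{-K}g$ simply by multiplying through by $K^2$. Hence on the open set $\{K<0\}$ the two conditions are equivalent, and the entire content of the theorem is about the transition across the set $\{K=0\}$, where only the polynomial form \eqref{eqMoroianu} continues to make sense.

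To prove that $i)$ implies $ii)$, I would invoke Theorem A with $c=H=0$. It gives $K=-\tfrac{1}{2}|\mathring{A}|^2\le 0$, shows that the zeros of $K$ coincide with the zeros of the holomorphic Hopf function $\phi$ of \eqref{eqholomorphic} and are therefore discrete, and shows that $\sqrt{-K}g$ is flat on the complement $\{K<0\}$. By the equivalence just recalled, the expression $-K\Delta K+|\nabla K|^2+4K^3$ vanishes on the open dense set $\{K<0\}$. Since $K$ is smooth this expression is continuous on all of $\Sigma^2$, so it vanishes identically, and $ii)$ holds, zeros included.

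The substantial direction is $ii)\Rightarrow i)$. On the interior of $\{K<0\}$ the equivalence gives that $\sqrt{-K}g$ is flat, and the converse part of Theorem A already produces local minimal isometric immersions there; if $K\equiv 0$ the surface is flat and immerses as a plane. Around an isolated zero $p$ of $K$ I would work in isothermal coordinates $z=x+iy$ with $g=e^{2u}(dx^2+dy^2)$ and set $F=-Ke^{4u}\ge 0$, the nonnegative function which, for an honest minimal immersion, equals $|\phi|^2$. Condition \eqref{eqMoroianu} says exactly that $\log F$ is harmonic where $F>0$, so $F_z/F=\partial_z\log F$ is holomorphic on the punctured neighbourhood; producing $\phi$ with $|\phi|^2=F$ amounts to showing that this meromorphic datum has a simple pole at $p$ with \emph{positive integer} residue $m$, equivalently that $F$ vanishes to even order $2m$, equivalently that the flat metric $|\phi|\,|dz|^2=\sqrt{-K}g$ has a cone angle of the admissible form $\pi(m+2)$, $m\in\mathbb{Z}_{\ge 0}$, at $p$. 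This is precisely where \eqref{eqMoroianu} contributes more than mere flatness on $\{K<0\}$: holding the equation \emph{at} $p$ pins down the correct local model for $K$ near its zero and rules out the Lawson-type pathology discussed in Remark \ref{examLawson}. Once the right residue is in hand, the lemma on harmonic logarithms (Lemma \ref{lemlogharmonic}) yields a genuine holomorphic $\phi$ with $|\phi|^2=F$ across $p$; defining $A$ from $\phi$ exactly as in the converse of Theorem A and invoking the Fundamental Theorem of Isometric Immersions then produces the desired local minimal immersion.

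The main obstacle, as this makes clear, is the local analysis at the zeros of $K$: one must extract from the second-order equation \eqref{eqMoroianu}, valid including where $K=0$, the integrality of the residue of $F_z/F$ (equivalently the admissibility of the cone angle of $\sqrt{-K}g$), which is exactly what separates metrics obeying the full Ricci condition from Lawson's example. I expect this to require a careful study of the asymptotics of solutions of \eqref{eqMoroianu} near an isolated zero, controlling the vanishing order of $K$ and the leading behaviour of $\sqrt{-K}g$, rather than any further appeal to the isometric-immersion machinery, which re-enters only once the holomorphic $\phi$ has been reconstructed.
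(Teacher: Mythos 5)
Your proposal correctly identifies the architecture of this result, and it is worth noting at the outset that the paper does not prove it either: Theorem \ref{thmMorMor} is quoted from \cite{MorMor}, the paper remarking only that $i)\Rightarrow ii)$ follows from the proof of Theorem A (via the Gauss equation and the Simons identity \eqref{eqSimons}), while the converse ``requires a careful study of the zero set of solutions to equation \eqref{eqMoroianu}'', which is carried out in \cite{MorMor}. Your forward direction is complete and agrees with this: away from the zero set of $K$ (discrete by Theorem A, unless $K\equiv 0$), flatness of $\sqrt{-K}g$ is equivalent to \eqref{eqMoroianu} after multiplying by $K^2$, and continuity extends the identity across the zeros. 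You should dispose separately of the case $K\equiv 0$, where \eqref{eqMoroianu} holds trivially and your density argument does not apply; alternatively, Simons' identity \eqref{eqSimons} combined with $|\nabla K|^2=-2K|\nabla|\mathring{A}||^2$ yields the identity pointwise everywhere, with no density argument needed.

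The genuine gap is in $ii)\Rightarrow i)$, and it sits exactly where you say it does --- but locating the difficulty is not the same as resolving it. Two steps are missing. First, you begin the local analysis ``around an isolated zero $p$ of $K$'', yet under hypothesis $ii)$ alone nothing guarantees that the zero set of $K$ is discrete or that $K$ vanishes identically; establishing this dichotomy for solutions of \eqref{eqMoroianu} is itself part of the required work. Second, and more importantly, the key claim --- that at each zero the meromorphic datum $\partial_z\log F$ has a simple pole with positive integer residue, equivalently that $K$ vanishes to even finite order so that $F=|\phi|^2$ for a genuine holomorphic $\phi$ across $p$ --- is precisely the content of Theorem 2 of \cite{MorMor}, and your proposal leaves it as an expectation (``I expect this to require a careful study of the asymptotics\ldots''). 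This is the step that separates \eqref{eqMoroianu} from mere flatness of $\sqrt{-K}g$ on $\{K<0\}$ and rules out Lawson's example (Remark \ref{examLawson}); without it the converse is not proved. Since the paper itself defers exactly this point to \cite{MorMor}, your proposal matches the paper's level of detail only if you are likewise permitted to cite that reference; as a self-contained argument it is incomplete at its crux.
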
	

	While $i)\Rightarrow ii)$ follows directly from the proof of Theorem A using Gauss equation and equation \eqref{eqSimons}, the converse, as alluded before, requires a careful study of the zero set of solutions to equation \eqref{eqMoroianu}, which ruled out behaviours like that of Lawson's example (\textit{Cf}. Remark \ref{examLawson}).

	Because of this and other results, Moroianu and Moroianu proposed to call \textit{Ricci surfaces} the Riemannian surfaces whose Gaussian curvature satisfies equation \eqref{eqMoroianu}, and initiated the systematic investigation of these objects.

	The works of Yiming Zang \cite{Zan} and Benoît Daniel and Yiming Zang \cite{DanZan} developed the concept further, the first by imposing conditions on the ends of complete non-compact Ricci surfaces that match the intrinsic properties of the ends of finite total curvature minimal surfaces, the second by investigating a generalised notion of Ricci surface which captures, among other things, the key properties of the intrinsic geometry of other types of immersed CMC surfaces in space forms. \\

	In the case of surfaces with boundary, one would like to combine equation \eqref{eqMoroianu} with a geometrically meaningful boundary condition. The ideas we have been exploring in this article suggest clearly that the following boundary condition fulfils such requirement.
	
	\begin{defi}\label{defMoroianuboundary}
		Let $(\Sigma^2,g)$ be a Riemannian surface with boundary. We say that $(\Sigma^2,g)$ is a \textit{Ricci surface with boundary} when its boundary $\partial \Sigma$ has constant geodesic curvature one and its Gaussian curvature $K$ satisfies:
		\begin{equation*}
			\begin{cases}
				-K\Delta K + |\nabla K|^2 + 4K^3 = 0 \quad \text{in} \quad \Sigma, \\
			\frac{\partial K}{\partial \nu} = -4K \quad \text{in} \quad \partial \Sigma.
			\end{cases}
		\end{equation*}
	\end{defi}		
	For instance, consider a Riemannian surface $(\Sigma^2,g)$ with boundary that is isometrically immersed as a minimal surface in $\mathbb{R}^3$ in such way that it meets a collection of unit spheres orthogonally along their boundary components, from their inside. According to Lemma \ref{lemma} and \ref{lemmaBen} (recall in particular equation \eqref{eq|A|bdry} for $|A|^2=-2K$), $(\Sigma^2,g)$ is a \textit{Ricci surface with boundary} in the sense of the Definition above.
	
	We believe that a statement similar to Theorem \ref{thmMorMor} should also be true. But to establish it would lead us much further than we would like in this article, and so we leave it here.
	
	In the absence of an Enneper-Weierstrass representation for free boundary minimal surfaces inside the unit ball, we think that the study of compact Ricci surfaces with boundary in the sense of Definition \ref{defMoroianuboundary} could be fruitful. (For instance, Jaehoon Lee and Eungbeom Yeon adopted a similar, but distinct perspective in \cite{LeeYeo}).
	\begin{rmk}
		In \cite{DomRonVit}, the authors classified surfaces of revolution in $\mathbb{R}^3$ that are Ricci surfaces in the sense of \cite{MorMor}. They also studied the intersection of these surfaces with the Euclidean sphere of radius one centred at the origin, and found a 1-parameter family of such surfaces interpolating between the critical catenoid and its axis of rotation. None of them, except the critical catenoid, are Ricci surfaces with boundary in the sense of Definition \ref{defMoroianuboundary}.
	\end{rmk}

\subsection{Steklov eigenvalues} 

	If $\Sigma^2$ is a compact minimal surface with boundary in $\mathbb{R}^3$ meeting the unit sphere centred at the origin orthogonally, then its coordinate functions $x^1,x^2,x^3$ satisfy:
	\begin{equation*}
		\begin{cases}
			\Delta x^i = 0 \quad \text{on} \quad \Sigma, \\
			\frac{\partial x^i}{\partial \nu} = x^i \quad \text{in} \quad \partial \Sigma. 
		\end{cases}
	\end{equation*} 
	This means that the three coordinate functions are \textit{Steklov eigenfunctions} of $(\Sigma^2,g)$, associated to the eigenvalue $1$. This observation was a starting point for the work of Ailana Fraser and Rick Schoen \cite{FraSch}, who sparked a renovated interest in the theory of compact free boundary minimal surfaces, by connecting it to  a spectral maximisation problem for the Steklov (or Dirichlet-to-Neumann) operator. 
	
	The relation between the spectral theory of the Steklov operator and compact minimal surfaces with free boundary is not restricted to the Euclidean case. Vanderson Lima and Ana Menezes \cite{LimMen} identified the right spectral problem that describes minimal surfaces with free boundary in geodesic spheres of the sphere $\mathbb{S}^3$. An interesting new feature is that this problem involves a combination of the first two Steklov eigenvalues. Later, Vladimir Medvedev, among other things, extended the work done in \cite{LimMen} to the case of minimal surfaces with free boundary in geodesic balls of the hyperbolic space $\mathbb{H}^3$. (See also the more general setup proposed Romain Petrides \cite{Pet}).
	
	The free boundary correspondence discussed in Section \ref{subsectionfreeboundarycorrespondence} suggests that such spectral relationships - which are, after all, part of the intrinsic geometry of these immersed surfaces with boundary - are even more widespread than earlier thought. They could be useful, for instance, in the study of compact constant mean curvature $H= 1$ surfaces with boundary in $\mathbb{H}^3$ that meet horospheres orthogonally. \\
		
\noindent \textbf{Acknowledgements:} I thank Ruy Tojeiro, who had the excellent idea of commemorating Professor Marcos Dajczer seventy-fifth birthday with an Festchrift volume, for the kind invitation to contribute to it with an article. I would also like to express my appreciation of my collaborators Alessandro Carlotto and Ben Sharp, with whom I enjoyed, over already eight years of scientific collaboration and friendship, so many interesting conversations about free boundary minimal surfaces. Several ideas that I exposed in this article were certainly born out of these conversations.

\section*{Appendix}
	Consider the following subsets of the complex plane $\mathbb{C}$,
	\begin{equation*}
		\mathbb{D} = \{ z=x+iy\in \mathbb{C}\,|\,|z|<1\} \quad \text{and} \quad \mathbb{D}_+ = \{z=x+iy \in \mathbb{D}\,|\, y> 0\},
	\end{equation*}		
	and let $\Gamma$ denote the horizontal boundary of $\mathbb{D}_+$,
	\begin{equation*}
		\Gamma = \{z=x+iy\in \overline{\mathbb{D}}_+\,|\, y=0\}.
	\end{equation*}
	
	The following elementary lemma was an important ingredient of the proof of Theorem A and Theorem C. (\textit{Cf}. Lemma 4.2 in \cite{MorMor}).	
	
	\begin{lemm}\label{lemlogharmonic}\hfill
		\begin{itemize}
			\item[$i)$] Let $F : \mathbb{D}\rightarrow \mathbb{R}$ be a positive smooth function such that $\log(F)$ is harmonic on $\mathbb{D}$. Then there exists a holomorphic function $h:D \rightarrow \mathbb{C}$ such that $F = |h|^2$.
			\item[$ii)$] Let $F : \mathbb{D}_+\cup \Gamma\rightarrow \mathbb{R}$ be a positive smooth function such that $\log(F)$ is harmonic on $\mathbb{D}_+$ and satisfies the Neumann boundary condition on $\Gamma$. Then there exists a holomorphic function $h:\mathbb{D} \rightarrow \mathbb{C}$ that has zero imaginary part on $\Gamma$ and is such that $F=|h|^2$ on $\mathbb{D}_+\cup \Gamma$.
		\end{itemize}
	\end{lemm}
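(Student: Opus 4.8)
The plan is to treat the two parts separately, deriving part $ii)$ from part $i)$ by a reflection argument.

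For part $i)$, since $\log F$ is harmonic on the simply connected disc $\mathbb{D}$, it admits a single-valued harmonic conjugate $v$, so that $g=\log F + i v$ is holomorphic on $\mathbb{D}$. Setting $h=e^{g/2}$ produces a nowhere-vanishing holomorphic function with $|h|^2 = e^{2Re(g/2)}=e^{\log F}=F$, as desired.

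For part $ii)$, the idea is to reduce to part $i)$ by extending $\log F$ across $\Gamma$ via even reflection. Because $\log F$ satisfies the homogeneous Neumann condition $\partial_y \log F = 0$ on $\Gamma$, the Schwarz reflection principle for the Neumann problem guarantees that the even extension $\tilde{u}(x,y):=\log F(x,|y|)$ is harmonic on all of $\mathbb{D}$. Applying the construction of part $i)$ to $\tilde{u}$ yields a holomorphic $h=e^{(\tilde{u} + i\tilde{v})/2}$ on $\mathbb{D}$ with $|h|^2 = e^{\tilde{u}}$, which restricts to $F$ on $\mathbb{D}_+\cup\Gamma$.

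It remains to arrange that $h$ has zero imaginary part on $\Gamma$, and this is the one point requiring care. The key is the parity of the harmonic conjugate. Since $\tilde{u}$ is even in $y$, the Cauchy--Riemann equations force $\partial_x \tilde{v} = -\partial_y \tilde{u}$ to be odd in $y$ and $\partial_y \tilde{v} = \partial_x \tilde{u}$ to be even in $y$. A short computation with $w(x,y):=\tilde{v}(x,y)+\tilde{v}(x,-y)$ shows that $\partial_x w = \partial_y w = 0$, so $w$ is constant; normalising the additive freedom in $\tilde{v}$ so that $\tilde{v}$ vanishes at a single point of $\Gamma$ then forces $w\equiv 0$, i.e. $\tilde{v}$ is odd in $y$ and hence vanishes identically on $\Gamma$. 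Consequently $g=\tilde{u} + i\tilde{v}$ is real on $\Gamma$, so $h=e^{g/2}=e^{\tilde{u}/2}>0$ there, which is exactly the stated condition $Im(h)=0$ on $\Gamma$. The main obstacle is thus not the existence of $h$, which is immediate from part $i)$, but rather propagating the Neumann/reflection symmetry through the conjugate function in order to secure the real boundary values.
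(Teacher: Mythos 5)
Your proposal is correct, and its skeleton coincides with the paper's: part $i)$ amounts to completing $\log F$ to a holomorphic function $g$ with real part $\log F$ and setting $h=\exp(g/2)$ (the paper builds $g=g_1+g_2$ by integrating $\partial_z\log F$ and conjugating, which is exactly the harmonic-conjugate construction you invoke on the simply connected disc), and part $ii)$ reduces to part $i)$ by even reflection across $\Gamma$. The genuine difference is the mechanism that makes $h$ real on $\Gamma$. The paper argues locally on the boundary: the Neumann condition makes $2\partial_z\log F=\partial_x\log F$ real on $\Gamma$, the Cauchy--Riemann equations then give $\partial_x\,\mathrm{Im}(g_1)=0$ there, so $\mathrm{Im}(g_1+g_2)$ equals a constant $k$ on the connected set $\Gamma$, and one corrects \emph{a posteriori} by the unimodular factor $\exp(-ik/2)$. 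You argue globally from the symmetry of the reflected extension: evenness of $\tilde u$ in $y$ plus Cauchy--Riemann shows $\tilde v(x,y)+\tilde v(x,-y)$ is constant, and normalising $\tilde v$ to vanish at one point of $\Gamma$ makes $\tilde v$ odd in $y$, hence identically zero on $\Gamma$, so no final rotation is needed. Your route buys a cleaner endgame (the additive normalisation of the conjugate absorbs the correction from the start), while the paper's computation uses only the boundary data on $\Gamma$ and does not lean on the global evenness of the extension. Both proofs leave the same standard step implicit, namely the Schwarz reflection principle for the Neumann condition (harmonicity of the even extension across $\Gamma$); if you want your write-up self-contained, note that the even extension is $C^1$ across $\Gamma$ and weakly harmonic, hence harmonic.
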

	
	\begin{proof}
		$i)$ Recall that $\Delta = 4\partial_{\overline{z}}\partial_z$. Thus, the assumption is that $\partial_z \log(F)$ is a holomorphic function on $\mathbb{D}$.
		
		Integrating over paths based at $0$, we obtain a holomorphic function $g_1 :\mathbb{D}\rightarrow \mathbb{C}$ such that $\partial_z g_1 = \partial_z \log(F)$. Then, the function $g_2=\overline{\log(F)-g_1}$ is holomorphic, because
	\begin{equation*}
		\partial_{\overline{z}}g_2 = \overline{\partial_z(\log(F)-g_1)}=0.
	\end{equation*}	
	Hence $\log(F)=g_1+\overline{g}_2$. The imaginary part of $g_1$ must be equal to the imaginary part of $g_2$, because $\log(F)$ is real. Furthermore, $\log(F)$ equals the real part of the sum $g_1+g_2$ as well. Therefore $h=\exp((g_1+g_2)/2)$ is a holomorphic function such that $|h|^2=F$.
			
		$ii)$ By virtue of the boundary condition, it is possible to extend $F$ to $\mathbb{D}$ by reflection across $\Gamma$, so that we may assume that $\log(F)$ is a harmonic function on $\mathbb{D}$ invariant by the reflection across $\Gamma$. Applying the same reasoning as in case $i)$, we obtain holomorphic functions $g_1$ and $g_2$ with the same imaginary part in such way that $\partial_z g_1 = \partial_z \log(F)$ and $\log(F)=g_1+\overline{g}_2$. 
				
		By assumption, $\partial_y \log(F) = 0$ on $\Gamma$. Hence,
		\begin{equation*}
			2\partial_z \log(F) = \partial_x \log(F) -i\partial_y \log(F) = \partial_x \log(F)
		\end{equation*}
is real on $\Gamma$. If we write $g_1=a+ib$ according to its real and imaginary part,
		\begin{equation*}
			2\partial_z g_1 = (\partial_x a + \partial_y b) + i(\partial_x b-\partial_y a).
		\end{equation*}
		Thus, the vanishing of the imaginary part of $\partial_z g_1 = \partial_z \log(F)$ on $\Gamma$ together with the Cauchy-Riemann equations satisfied by $g_1$ implies, in particular, that
		\begin{equation*}
			\partial_x b = \partial_y a = -\partial_x b \quad \text{on} \quad \Gamma \quad \Rightarrow \quad \partial_x b = 0 \quad \text{on} \quad \Gamma.
		\end{equation*}
		Hence $g_1$ has constant imaginary part on $\Gamma$. But then the imaginary part of $g_2$ is constant as well.
		
		In conclusion, the holomorphic function $g_1+g_2$ has constant imaginary part on $\Gamma$, say $k\in \mathbb{R}$. The function $h=\exp((g_1+g_2)/2)\exp(-ki/2)$ is therefore a holomorphic function, with squared norm equal to $F$, which moreover has zero imaginary part on $\Gamma$. And this is all we wanted to show.
	\end{proof}

\end{document}